\newtheorem{nntheorem}{\bf Theorem}
\newtheorem{nnassumption}{\bf Assumption}
\newtheorem{nndefinition}{\bf Definition}[section]
\newtheorem{nnlemma}[nndefinition]{\bf Lemma}
\newtheorem{nncorollary}[nndefinition]{\bf Corollary}
\newtheorem{nnproposition}[nndefinition]{\bf Proposition}
\newtheorem{nexample}[nndefinition]{\bf Example}
\newenvironment{theorem}
{\begin{nntheorem}\it}
{\end{nntheorem}}
\newenvironment{proposition}
{\begin{nnproposition}\it}
{\end{nnproposition}}
\newenvironment{corollary}
{\begin{nncorollary}\it}
{\end{nncorollary}}
\newenvironment{nnexample}
{\begin{nexample}\rm}{\end{nexample}}
\newtheorem{nnremark}[nndefinition]{\bf Remark}
\newenvironment{remark}{\begin{nnremark}}{\hfill \hspace*{1pt}\hfill $\circ$\end{nnremark}}
\definecolor{marron}{rgb}{0.64,0.16,0.16}
\newcommand{\startchris}{\color{red}}}
\newcommand{\stopchris}{\color{black}}
\def\ker{\mathtt{Ker}}
\def\RR{\mathbb{R}}
\def\NN{\mathbb{N}}
\def\diag{\mathtt{diag}}
\title{Stability of switched linear hyperbolic systems by Lyapunov techniques}}
\author{Christophe Prieur, Antoine Girard, and Emmanuel Witrant\thanks{C. Prieur and E. Witrant are from the Department of Automatic Control,  Gipsa-lab, Universit\'e de Grenoble, 11 rue des Math\'ematiques, BP 46,
38402 Saint-Matin d'H\`eres Cedex, France.  Email: {\tt\small $\lbrace$christophe.prieur, emmanuel.witrant$\rbrace$@gipsa-lab.fr} and A. Girard is with Laboratoire Jean Kuntzmann, Universit\'e de Grenoble, BP 53, 38041 Grenoble, France, 
{\tt\small antoine.girard@imag.fr}. This work is partly supported by HYCON2 Network of Excellence Highly-Complex and Networked Control Systems, grant agreement 257462.}}
\begin{document}
\maketitle

\begin{abstract}
Switched linear hyperbolic partial differential equations are considered in this
paper. They model infinite dimensional systems of conservation laws and balance
laws, which are potentially affected by a distributed source or sink term.
The dynamics and the boundary conditions are subject to abrupt changes given by 
a switching signal, modeled
as a piecewise constant function and possibly a dwell time.
By means of Lyapunov techniques some sufficient
conditions are obtained for the exponential stability of the
switching system, uniformly for all switching signals.
Different cases are considered with or without a dwell time
assumption on the switching signals, and on the number of 
positive characteristic velocities (which may also depend on the
switching signal). Some numerical simulations are also given to
illustrate some main results, and to motivate this study.
\end{abstract}


\section{Introduction}
Lyapunov techniques are commonly used for the stability analysis of dynamical systems, {such} as those modeled by partial differential 
equations (PDEs). 
The present paper focuses on a class of {one-dimensional}  
hyperbolic equations that describe, for example, systems of conservation laws or balance laws (with a source term),
see \cite{DiagneBastinCoron:automatica:11}.

\ifthenelse{\boolean{arxivbool}}{A switching behavior occurs for many control applications when the evolution processes involve logical decisions, see 
\cite{ElFarraChristofides:chemical:2004} for the case {where} a stabilizing feedback is designed by means of Lyapunov techniques applied to a 
discretization of 
switched parabolic PDE; see also \cite{HanteLeugeringSeidman:applied:2009}, where the well-posed issue and the dependence of the solutions
on the data of a network of hyperbolic equations with switching as a control are considered. Switching can indeed be an efficient control strategy 
for many 
infinite dimensional systems such as the wave equation (\cite{GugatTucsnak:string:2011}), the heat equation (\cite{Zuazua:Switching:2011}) or 
other infinite dimensional systems written in abstract form (as in \cite{HanteSigalottiTucsnak:arxiv:2011}).}{}

\startchris The exponential stabilizability of such systems is often proved by means of a Lyapunov function, 
as illustrated by the 
contributions from 
\cite{KrsticSmyshlyaev:scl:08,PrieurHalleux04} 
where different control problems are solved for particular 
hyperbolic equations. 
For more general nonlinear hyperbolic equations, the knowledge of Lyapunov functions 
can be useful for the stability analysis of a system of conservation laws
(see \cite{CoronAndreaBastin07}), or even for the design of exponentially
stabilizing boundary controls (see \cite{CoronBastinAndrea:SIAM:08}). 
Other control techniques may be useful, such as Linear Quadratic regulation \cite{Aksikas:automatica:2009} or semigroup theory \cite[Chap. 6]{luo-guo-morgul-1999}.
\stopchris

In this paper, the class of hyperbolic systems of balance laws is first considered without any switching rule and we state sufficient conditions to 
derive a Lyapunov function for this class of systems. It allows us to relax \cite{DiagneBastinCoron:automatica:11} where the Lyapunov stability for hyperbolic systems of balance laws has been first tackled (see also 
\cite{CoronAndreaBastin07}). Then, switched systems are considered and sufficient conditions for the asymptotic stability of a class of linear hyperbolic systems with switched dynamics and switched boundary conditions are stated. Some stability conditions depend on the average dwell
time of the switching signals (if such a positive dwell time does exist).
The stability property
depends on the classes of the switching rules applied to the dynamics (as in \cite{liberzon:book:2003} for finite dimensional systems). The present paper is also related to \cite{PrieurMazenc:hyper:11} where unswitched time-varying hyperbolic systems are considered.

\startchris
In \cite{AminHanteBayen:IEEETAC:2011}, the condition of \cite{libook:94} is employed. It allows analyzing the stability of hyperbolic systems, assuming a stronger hypothesis on the boundary conditions. More precisely, our approach generalizes the condition of \cite{CoronAndreaBastin07}, which is known to be strictly weaker than the one of \cite{libook:94}. Therefore our stability conditions are strictly weaker than the ones of \cite{AminHanteBayen:IEEETAC:2011}.
Moreover the technique in \cite{AminHanteBayen:IEEETAC:2011} is trajectory-based via the method of characteristics, while our approach is based on Lyapunov functions, allowing for numerically tractable conditions.
\stopchris
Indeed, the obtained sufficient conditions are written in terms of matrix inequalities, which can be solved numerically. Furthermore the estimated speed of exponential convergence is provided and can be optimized. See \ifthenelse{\boolean{arxivbool}}{Section \ref{sec:comp}}{\cite{Prieur:Girard:Witrant:arxiv:2013}} for the use of line search algorithms to numerically compute the variables in our stability conditions, and thus to compute Lyapunov functions.
The main results and the computational aspects are illustrated on two examples of switched linear hyperbolic  systems.

\ifthenelse{\boolean{arxivbool}}{The paper is organized as follows. The class of switched linear hyperbolic systems of balance laws considered in this paper is given in Section \ref{sec:2} and a first stability condition is proven. Switched systems of balance laws are presented in Section \ref{sec:3}. In Section \ref{sec:4} our main results are derived for the stability of switched hyperbolic systems. The conditions depend on the class of piecewise constant switching signals that is considered (with and without a sufficiently large dwell time). The stability conditions may also differ if the number of positive characteristic velocities does not depend on the switching signal (see Section \ref{sec:4.1}) or if this number is a function of this signal (see Section \ref{sec:stabedep}).\ifthenelse{\boolean{arxivbool}}{ Section \ref{sec:comp} collects the discussions on computational aspects. It deals in particular with the numerical check of our
stability conditions, and the numerical computations of the considered Lyapunov functions.}{}
In Section \ref{sec:ex} two examples illustrate the main results and motivate the class of Lyapunov functions considered in this paper.
}{}

\ifthenelse{\boolean{arxivbool}}{}{Due to space limitation, some proofs have been omitted and collected in \cite{Prieur:Girard:Witrant:arxiv:2013}.}

{\small {\bf Notation.} 
The set $\RR_+$ is the set of nonnegative real numbers. 
Given a matrix $G$, the transpose matrix of $G$ is denoted as $G^\top$.
When $G$ is invertible, then, to simplify the notation, $(G^{-1})^\top$ is denoted as $G^{-\top}$. For positive integers $m$ and $n$, $I_n$ and $0_{n,m}$ are respectively the identity and the null matrix in $\RR^{n\times n}$ and in $\RR^{n\times m}$. Given some scalar values $(a_1, \ldots , a_n)$, $\diag(a_1, \ldots , a_n)$ is the matrix in $\RR^{n\times n}$ with zero non-diagonal entries, and with $(a_1, \ldots , a_n)$ on the diagonal. 
Moreover given two matrices $A$ and $B$, $\diag [A,B]$ is the block diagonal matrix formed by $A$ and $B$ (and zero for the other entries).  
The notation $A\geq B$ means that $A-B$ is positive semidefinite.
The usual Euclidian norm in $\RR^n$ is denoted by $|\cdot |$ and the associated matrix norm is denoted $\|\cdot \|$, whereas the set of all functions $\phi:(0,1)\rightarrow \RR^n$ such that $\int_0^1 | \phi (x)| ^2 <\infty $ is denoted by $L^2 ((0,1);\RR^n)$ that is equipped with the norm $\|\cdot \|_{L^2 ((0,1);\RR^n)}$. Given a topological set $\mathcal{S}$, and an interval $I$ in $\RR_+$,  
the set $C^0(I,\mathcal{S})$ is the set of continuous functions $\phi:I\rightarrow \mathcal{S}$.
}

\section{Linear hyperbolic systems}
\label{sec:2}
Let us first consider the following linear hyperbolic partial differential equation:
\begin{equation}
\label{eq:lin}
\partial_t y(t,x)+\Lambda  \partial_ x
  y(t,x) = Fy(t,x), \quad  x \in [0,1],\; t\in \RR_+
\end{equation}
where $y:\RR_+\times [0,1] \rightarrow \RR^n$, $F$ is a matrix in $\RR^{n\times n}$, $\Lambda$ is a diagonal matrix in $\RR^{n\times n}$ such that
$\Lambda=\diag(\lambda_{1}, \ldots , \lambda_{n})$, with
$\lambda_k <0$ for $k\in\{1,\dots,m\}$ and $\lambda_k>0$ for $k\in \{m+1,\dots,n\}$.
We use the notation
$y=\left(\begin{smallmatrix} y^-\\y^+\end{smallmatrix}\right)$, where $y^-:\RR_+\times [0,1] \rightarrow\RR^{m}$ and $y^+:\RR_+\times [0,1] \rightarrow\RR^{n-m}$.
In addition, we consider the following boundary conditions:
 \begin{equation}\label{eq:bound}
  \left(\begin{smallmatrix}
      y^-(t,1) \\
      y^+(t,0) \\
    \end{smallmatrix}
  \right)
  =G
  \left(\begin{smallmatrix}
      y^-(t,0) \\
      y^+(t,1) \\
    \end{smallmatrix}
  \right), \quad   t \in \RR_+
\end{equation}
where $G$ is a matrix in $\RR^{n\times n}$. Let us introduce the matrices $G_{--}$ in $\RR^{m\times m}$,  $G_{-+}$ in $\RR^{m\times (n-m)}$,  
$G_{+-}$ in $\RR^{(n-m)\times m }$ and $G_{++ }$ in  $\RR^{(n-m)\times (n-m)}$ such that $G=\left(\begin{smallmatrix}
G_{--} &G_{-+}\\
G_{+-} &G_{++} \end{smallmatrix}\right)$.

We shall consider an initial condition given by 
\begin{equation}
\label{eq:init}
y(0,x)=y^0(x), \quad  x \in (0,1)
\end{equation}
where $y^0\in L^2((0,1);\RR^n)$.
Then, it can be shown (see e.g.~\cite{DiagneBastinCoron:automatica:11}) that there exists a unique solution 
$y \in C^0(\RR_+;L^2((0,1);\RR^n))$ to the initial value problem (\ref{eq:lin})-(\ref{eq:init}). 
As these solutions may not be differentiable everywhere, the concept of weak solutions of partial differential equations has to be used 
(see again~\cite{DiagneBastinCoron:automatica:11} for more details).
The linear hyperbolic system (\ref{eq:lin})-(\ref{eq:bound}) is said to be {\em globally exponentially stable} (GES) if there exist $\nu >0$ and
$C>0$ such that, for every $y_0\in L^2((0,1);\RR^n)$; the solution to the initial value problem (\ref{eq:lin})-(\ref{eq:init}) satisfies
\begin{equation}
\label{eq:stab}
\|y(t,.)\|_{L^2((0,1);\RR^n)} \le C e^{-\nu t} \|y^0\|_{L^2((0,1);\RR^n)},  \quad \forall t\in \RR_+ .
\end{equation}
Sufficient conditions for exponential stability of (\ref{eq:lin})-(\ref{eq:init}) have been obtained in~\cite{DiagneBastinCoron:automatica:11}
using a Lyapunov function. In this section, we present an extension of this result. This extension will be also useful for subsequent work on
switched linear hyperbolic systems.
  
Let $\Lambda^+=\diag(|\lambda_{1}|, \ldots , |\lambda_{n}|)$.

\begin{proposition}
\label{th:stab1}
Let us assume that there exist $\nu>0$, $\mu\in \RR$ and symmetric positive definite matrices $Q^-$ in $\RR^{m\times m}$ and
$Q^+$ in $\RR^{(n-m)\times (n-m)}$
such that, defining for each $x$ in $[0,1]$, $\mathcal{Q}(x) =  \diag [e ^{2\mu x} Q ^- ,e ^{-2\mu x} Q^+] $,  $\mathcal{Q}(x)\Lambda = \Lambda \mathcal{Q}(x)$, the following matrix inequalities hold
\begin{equation}
\label{eq:lmi1}
-2 \mu \mathcal{Q}(x)\Lambda^+ + F^\top \mathcal{Q}(x)+\mathcal{Q}(x) F \le -2\nu \mathcal{Q}(x)
\end{equation}
\begin{equation}
\label{eq:lmi2}
 \left(\begin{matrix}
 I_{m}& 0_{m,n-m} \\
      G_{+-}& G_{++}
    \end{matrix}
  \right)^\top
  \mathcal{Q}(0) \Lambda\left(\begin{matrix}
 I_{m}& 0_{m,n-m} \\
      G_{+-}& G_{++}
    \end{matrix}
  \right)  
\leq   \left(\begin{matrix}
      G_{--}& G_{-+} \\
      0_{n-m,m}& I_{n-m} \\
    \end{matrix}
  \right)^\top
   \mathcal{Q}(1)\Lambda \left(\begin{matrix}
      G_{--}& G_{-+} \\
      0_{n-m,m}& I_{n-m} \\
    \end{matrix}
  \right).
 \end{equation}
Then there exists $C$ such that (\ref{eq:stab}) holds and the linear hyperbolic system (\ref{eq:lin})-(\ref{eq:bound}) is GES.
\end{proposition}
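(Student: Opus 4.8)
The plan is to use the weighted Lyapunov functional
\[
V(t)=\int_0^1 y(t,x)^\top \mathcal{Q}(x)\, y(t,x)\,dx ,
\]
whose integrand is exactly the quadratic form entering the hypotheses. Since $Q^-$ and $Q^+$ are symmetric positive definite and the weights $e^{\pm 2\mu x}$ are bounded above and below on $[0,1]$, there exist constants $0<\alpha\le\beta$ with $\alpha |v|^2\le v^\top \mathcal{Q}(x)v\le\beta |v|^2$ for all $x\in[0,1]$ and all $v\in\RR^n$; hence $V(t)$ is equivalent to $\|y(t,\cdot)\|_{L^2((0,1);\RR^n)}^2$. It therefore suffices to prove the differential inequality $\dot V\le -2\nu V$ and to conclude by Gronwall's lemma that $V(t)\le e^{-2\nu t}V(0)$, from which (\ref{eq:stab}) follows with $C=\sqrt{\beta/\alpha}$.

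First I would compute $\dot V$ along classical ($C^1$) solutions, substituting $\partial_t y=-\Lambda\partial_x y+Fy$ from (\ref{eq:lin}). This splits $\dot V$ into a reaction part $\int_0^1 y^\top\big(F^\top\mathcal{Q}(x)+\mathcal{Q}(x)F\big)y\,dx$ and a transport part. Using that $\Lambda$ is symmetric together with the commutation $\mathcal{Q}(x)\Lambda=\Lambda\mathcal{Q}(x)$, the transport part rewrites as $-\int_0^1 \partial_x\big(y^\top\mathcal{Q}(x)\Lambda y\big)\,dx+\int_0^1 y^\top(\partial_x\mathcal{Q}(x))\Lambda y\,dx$. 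The key algebraic observation is that, by the sign pattern of $\Lambda$ and the definition of $\Lambda^+$, one has $(\partial_x \mathcal{Q}(x))\Lambda=-2\mu\,\mathcal{Q}(x)\Lambda^+$: differentiating the two diagonal blocks produces factors $\pm 2\mu$ which, combined with $\lambda_k=-|\lambda_k|$ for $k\le m$ and $\lambda_k=|\lambda_k|$ for $k>m$, give exactly $-2\mu$ times $\mathcal{Q}(x)\Lambda^+$ on each block. Collecting terms yields $\dot V=\int_0^1 y^\top\big(-2\mu\mathcal{Q}(x)\Lambda^+ +F^\top\mathcal{Q}(x)+\mathcal{Q}(x)F\big)y\,dx-\big[\,y^\top\mathcal{Q}(x)\Lambda y\,\big]_{0}^{1}$, and the integral is bounded above by $-2\nu V$ thanks to (\ref{eq:lmi1}).

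It then remains to show that the boundary contribution to $\dot V$, namely $B:=y(t,0)^\top\mathcal{Q}(0)\Lambda y(t,0)-y(t,1)^\top\mathcal{Q}(1)\Lambda y(t,1)$, is nonpositive. Introducing the free boundary vector $z=\left(\begin{smallmatrix}y^-(t,0)\\ y^+(t,1)\end{smallmatrix}\right)$, the boundary condition (\ref{eq:bound}) gives $y(t,0)=M_0\,z$ and $y(t,1)=M_1\,z$, where $M_0=\left(\begin{smallmatrix}I_m & 0\\ G_{+-} & G_{++}\end{smallmatrix}\right)$ and $M_1=\left(\begin{smallmatrix}G_{--} & G_{-+}\\ 0 & I_{n-m}\end{smallmatrix}\right)$ are precisely the matrices appearing in (\ref{eq:lmi2}). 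Substituting, $B=z^\top\big(M_0^\top\mathcal{Q}(0)\Lambda M_0 - M_1^\top\mathcal{Q}(1)\Lambda M_1\big)z$, which is nonpositive exactly by inequality (\ref{eq:lmi2}). Hence $\dot V\le -2\nu V$ for classical solutions.

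The main obstacle is regularity: since (\ref{eq:init}) only gives $y^0\in L^2((0,1);\RR^n)$, the solution is merely a weak solution, so the differentiation of $V$ and the integration by parts above are not a priori justified. I would handle this by first establishing the inequality for $C^1$ initial data compatible with (\ref{eq:bound}), for which the computation is rigorous, and then extending to arbitrary $y^0\in L^2$ by a density argument, invoking the continuous dependence of the solution on the data in $C^0(\RR_+;L^2((0,1);\RR^n))$ recalled after (\ref{eq:init}). It is this passage to the limit, rather than the algebra, where the care is needed.
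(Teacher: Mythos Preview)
Your proof is correct and follows essentially the same route as the paper: the same weighted Lyapunov functional, the same identity $(\partial_x\mathcal{Q}(x))\Lambda=-2\mu\,\mathcal{Q}(x)\Lambda^+$, the same integration by parts producing the boundary term, and the same rewriting of that boundary term via $z=\left(\begin{smallmatrix}y^-(t,0)\\ y^+(t,1)\end{smallmatrix}\right)$ so that (\ref{eq:lmi2}) applies. Your explicit remark on the density argument for passing from classical to $L^2$ data is in fact more careful than the paper, which computes formally without commenting on regularity.
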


\ifthenelse{\boolean{arxivbool}}{\begin{proof}
Let us consider the Lyapunov function, for all $y\in L^2((0,1);\RR^n)$,
$$
V(y)=\int_0^1 y(x)^\top \mathcal{Q}(x) y(x)\ dx.
$$
Since $\mathcal{Q}(x)$ and $\Lambda$ commute and are symmetric, then $\mathcal{Q}(x)\Lambda$ is symmetric, $\partial_x \mathcal{Q}(x) \Lambda =- 2\mu \mathcal{Q}(x) \Lambda ^+$, and 
\begin{equation}
\label{eq:der}
y^\top \mathcal{Q}(x) \Lambda\partial_ x y+\partial_ x y \Lambda \mathcal{Q}(x) y  -2\mu  y ^\top \mathcal{Q}(x)\Lambda  ^+ y= 
{\partial_x} (y^\top \mathcal{Q}(x) \Lambda y).
\end{equation}
Then, computing the time-derivative of $V$ along the solutions of (\ref{eq:lin}) yields the following:
\begin{eqnarray*}
\dot V(y)&=& \int_0^1 ( y^\top \mathcal{Q}(x) \partial_t y  +\partial_t y^\top \mathcal{Q}(x)  y)dx
\\
&=& - \int_0 ^1 y ^\top \mathcal{Q}(x) \Lambda \partial _x   y \ dx  - \int_0 ^1  \partial _x y ^\top \Lambda \mathcal{Q}(x) y \ dx+\int_0 ^1  y ^\top (F^\top  \mathcal{Q}(x)+ \mathcal{Q}(x) F)    y \ dx.
\end{eqnarray*}
Then, Equation (\ref{eq:der}) yields:
\begin{eqnarray*}
\dot V (y)&=& -[y^\top \mathcal{Q}(x)\Lambda y ]_0^1 
- \int_0 ^1 2 \mu 
y^\top \mathcal{Q}(x)\Lambda^+ y \ dx 
 +\int_0 ^1  y^\top (F^\top \mathcal{Q}(x)  +\mathcal{Q}(x) F) y  \ dx \\
& = & y^\top(t,0) \mathcal{Q}(0)\Lambda y(t,0) - y^\top(t,1) \mathcal{Q}(1) \Lambda y(t,1) \\
&&  + \int_0 ^1 y^\top (-2 \mu \mathcal{Q}(x)\Lambda^+ + F^\top \mathcal{Q}(x) +\mathcal{Q}(x) F) y  \ dx \\
& = & \left(\begin{matrix}
      y^-(t,0) \\
      y^+(t,1) \\
    \end{matrix}
  \right)^\top
\left( 
 \left(\begin{matrix}
 I_{m}& 0_{m,n-m} \\
      G_{+-}& G_{++}
    \end{matrix}
  \right)^\top
  \mathcal{Q}(0)\Lambda\left(\begin{matrix}
 I_{m}& 0_{m,n-m} \\
      G_{+-}& G_{++}
    \end{matrix}
  \right)  
\right.  \\
&&  
\left.-  \left(\begin{matrix}
      G_{--}& G_{-+} \\
      0_{n-m,m}& I_{n-m} \\
    \end{matrix}
  \right)^\top
   \mathcal{Q}(1)\Lambda \left(\begin{matrix}
      G_{--}& G_{-+} \\
      0_{n-m,m}& I_{n-m} \\
    \end{matrix}
  \right)
 \right) \left(\begin{matrix}
      y^-(t,0) \\
      y^+(t,1) \\
    \end{matrix}
  \right)\\
&&  + \int_0 ^1 y^\top \left(-2 \mu \mathcal{Q}(x)\Lambda^+ + F^\top \mathcal{Q}(x)+\mathcal{Q}(x) F \right)y\ dx 
\end{eqnarray*}
where the last equality is obtained by the boundary conditions (\ref{eq:bound}).
Then, (\ref{eq:lmi1}) and (\ref{eq:lmi2}) imply that $\dot V (y(t,.))\le -2 \nu V(y(t,.))$ which yields, for all $t\in\RR_+ $, 
$V(y(t,.)) \le V(y^0) e^{-2\nu t}$. By remarking that there exist $\alpha>0$, $\beta>0$ (depending on the eigenvalues of $Q^-$, $Q^+$ and on $\mu$) such that 
$
\alpha \|y(t,.)\|_{L^2((0,1);\RR^n)} \le \sqrt{V(y(t,.))} \le \beta \|y(t,.)\|_{L^2((0,1);\RR^n)}$,
we obtain that, for all $t\in\RR_+ $,
$
 \|y(t,.)\|_{L^2((0,1);\RR^n)} \le \frac{\beta}{\alpha} e^{-\nu t} \|y^0\|_{L^2((0,1);\RR^n)}
$.
\end{proof}}{The complete proof of this proposition is detailed in \cite{Prieur:Girard:Witrant:arxiv:2013} and is based on the inequality $\dot V (y(t,.))\le -2 \nu V(y(t,.))$
along the solutions of (\ref{eq:lin}) with the boundary conditions (\ref{eq:bound}), where the Lyapunov function $V$ is defined by $V(y)=\int_0^1 y(x)^\top \mathcal{Q}(x) y(x)\ dx$.
}

If all the diagonal elements of $\Lambda$ are different, the assumption that $\mathcal{Q}(x)\Lambda = \Lambda \mathcal{Q}(x)$ is equivalent to $Q$ being diagonal positive definite\footnote{This equivalence follows from the computation of matrices $\mathcal{Q}(x)\Lambda$ and $\Lambda \mathcal{Q}(x)$, and from a comparison between each of their entries.}. The main contributions of the previous proposition with respect to the result presented in~\cite{DiagneBastinCoron:automatica:11} is double: first, we do not restrict the values of parameter $\mu$ to be positive, this allows us to consider non-contractive boundary conditions (it will be the case for the numerical illustration considered in Example \ref{ex:B}); second, we provide an estimate of the exponential convergence rate\ifthenelse{\boolean{arxivbool}}{ (see Section \ref{sec:comp}
 for computational aspects of this estimate)}{ (see \cite{Prieur:Girard:Witrant:arxiv:2013} for more details)}.
\ifthenelse{\boolean{arxivbool}}{When all the diagonal elements of the matrix $\Lambda$ are positive, then 
Proposition~\ref{th:stab1} can be interpreted in terms of two finite dimensional linear systems that share a common Lyapunov function: one in continuous-time associated to (\ref{eq:lin}) and  one in discrete-time associated to (\ref{eq:bound}). Indeed we have the following result:

\begin{corollary}\label{cor:2.2}
Let us assume that $m=0$ and there exists a diagonal positive definite matrix $M$ in $\RR^{n\times n}$ such that $V:$ $y\in\RR^n \mapsto y^TMy$ is a common Lyapunov function
for the continuous-time and discrete-time linear systems
\begin{equation}
\label{eq:cont}
\dot y(t) = \left(\Lambda^{-1} F -\mu I\right) y(t), \quad t\in \RR_+,
\end{equation}
\begin{equation}
\label{eq:disc}
y(t+1) = \left(e^{\mu}G \right) y(t), \quad t\in \NN.
\end{equation}
Then, the linear hyperbolic system (\ref{eq:lin})-(\ref{eq:bound}) is GES.
\end{corollary}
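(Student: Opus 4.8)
The plan is to deduce the corollary from Proposition~\ref{th:stab1} by exhibiting matrices $Q^-,Q^+$ and scalars $\nu,\mu$ that satisfy its hypotheses. Since $m=0$, there is no $Q^-$ block and $\Lambda^+=\Lambda$ has positive diagonal, so $\mathcal{Q}(x)=e^{-2\mu x}Q^+$ with $Q^+\in\RR^{n\times n}$. I would set $Q^+:=M\Lambda^{-1}$. Because $M$ and $\Lambda$ are both diagonal, $Q^+$ is again diagonal with strictly positive entries, hence symmetric positive definite, and it commutes with $\Lambda$; this is exactly where the diagonality of $M$ is used, as it makes the hypothesis $\mathcal{Q}(x)\Lambda=\Lambda\mathcal{Q}(x)$ automatic. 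Note for later that $Q^+\Lambda=M\Lambda^{-1}\Lambda=M$ and, by commutation of diagonal matrices, $\Lambda^{-1}M=M\Lambda^{-1}=Q^+$.

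Next I would specialize the two matrix inequalities of Proposition~\ref{th:stab1} to the case $m=0$. The two boundary matrices collapse to $\left(\begin{smallmatrix} I_m & 0_{m,n-m}\\ G_{+-} & G_{++}\end{smallmatrix}\right)=G$ and $\left(\begin{smallmatrix} G_{--} & G_{-+}\\ 0_{n-m,m} & I_{n-m}\end{smallmatrix}\right)=I_n$, while $\mathcal{Q}(0)=Q^+$ and $\mathcal{Q}(1)=e^{-2\mu}Q^+$. Hence (\ref{eq:lmi2}) reads $G^\top Q^+\Lambda\, G\le e^{-2\mu}Q^+\Lambda$, i.e.\ $G^\top M G\le e^{-2\mu}M$ after substituting $Q^+\Lambda=M$. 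Multiplying by $e^{2\mu}>0$ gives $(e^{\mu}G)^\top M(e^{\mu}G)\le M$, which is precisely the discrete-time Lyapunov inequality associated with (\ref{eq:disc}); it holds because $V(y)=y^\top M y$ is assumed to be a Lyapunov function for that system.

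Finally I would treat (\ref{eq:lmi1}). Factoring out the positive scalar $e^{-2\mu x}$ and using $\Lambda^+=\Lambda$, the inequality becomes $F^\top Q^+ + Q^+F - 2\mu Q^+\Lambda\le -2\nu Q^+$. The continuous-time Lyapunov hypothesis for (\ref{eq:cont}) states that $(\Lambda^{-1}F-\mu I)^\top M+M(\Lambda^{-1}F-\mu I)=F^\top\Lambda^{-1}M+M\Lambda^{-1}F-2\mu M$ is negative definite; using $\Lambda^{-1}M=M\Lambda^{-1}=Q^+$ and $M=Q^+\Lambda$, this rewrites exactly as $F^\top Q^+ + Q^+F - 2\mu Q^+\Lambda<0$. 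Thus the left-hand side of (\ref{eq:lmi1}) is negative definite, and since $Q^+$ is positive definite I can pick $\nu>0$ small enough (for instance $2\nu$ equal to the smallest eigenvalue of $(Q^+)^{-1/2}\,[\,2\mu Q^+\Lambda-F^\top Q^+-Q^+F\,]\,(Q^+)^{-1/2}$) so that (\ref{eq:lmi1}) holds. With both (\ref{eq:lmi1}) and (\ref{eq:lmi2}) verified, Proposition~\ref{th:stab1} yields GES. The only delicate points are bookkeeping: the change of variables $Q^+=M\Lambda^{-1}$, the systematic use of diagonal commutation, and the passage from the strict continuous inequality to the $\nu$-margin form of (\ref{eq:lmi1}); none of these is a genuine obstacle, so the real content of the corollary is simply the reinterpretation of the two LMIs as a common-Lyapunov-function condition for the pair (\ref{eq:cont})--(\ref{eq:disc}).
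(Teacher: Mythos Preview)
Your proof is correct and follows essentially the same approach as the paper: set $Q^+=M\Lambda^{-1}$ and verify that the continuous- and discrete-time Lyapunov inequalities for (\ref{eq:cont}) and (\ref{eq:disc}) translate directly into (\ref{eq:lmi1}) and (\ref{eq:lmi2}), respectively, so that Proposition~\ref{th:stab1} applies. Your write-up is in fact slightly more explicit than the paper's in spelling out the $m=0$ specialization and the choice of $\nu$.
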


\begin{proof} Remark first that $\Lambda^+=\Lambda$ since it is assumed that $m=0$. Let $Q=M\Lambda^{-1}$, then $Q$ is diagonal positive definite. 
By writing the Lyapunov equation of the continuous-time system (\ref{eq:cont}), we obtain
$
\left(\Lambda^{-1} F -\mu I\right)^\top M + M \left(\Lambda^{-1} F -\mu I\right) <0
$
which can be rewritten as
$
\left( F -\mu \Lambda\right)^\top Q + Q \left( F -\mu \Lambda \right)=-2 \mu Q\Lambda + F^\top Q +Q F  <0.
$
This implies existence of $\nu>0$ such that (\ref{eq:lmi1}) holds.
Also the Lyapunov equation of the discrete-time system (\ref{eq:disc}) gives
$
e^{\mu}G^\top M e^{\mu} G \le M.
$
This can be rewritten as 
$
G^\top Q\Lambda G \le e^{-2\mu}Q\Lambda
$
which is equivalent to (\ref{eq:lmi2}), since $m=0$ and $\mathcal{Q}(x)=e^{-2\mu x} Q$. Thus the assumptions of Proposition~\ref{th:stab1} hold and this concludes the proof of Corollary \ref{cor:2.2}.
\end{proof}

Let us remark that increasing $\mu$  improves the stability of (\ref{eq:cont}) and degrades that of (\ref{eq:disc})
while decreasing $\mu$ will have the reverse effect. Another interpretation of the previous result is 
that the linear hyperbolic system (\ref{eq:lin})-(\ref{eq:bound}) is GES if there is a balance between the  expansion (respectively contraction) rate of the
continuous-time linear system
$
\dot y(t) = \Lambda^{-1} Fy(t)
$ 
and the contraction (respectively expansion) rate of discrete-time linear system $y(t+1) = Gy(t)$.}{}

\section{Switched linear hyperbolic systems}\label{sec:3}

We now consider the case of switched linear hyperbolic partial differential equation of the form (see \cite{AminHanteBayen:IEEETAC:2011})
\begin{equation}
\label{eq:slin}
\partial_t w(t,x)+L_{\sigma(t)}  \partial_ x
  w(t,x) = A_{\sigma(t)}w(t,x) \;, \quad x \in [0,1],\; t\in \RR_+
\end{equation}
where $w:\RR_+\times [0,1] \rightarrow \RR^n$, $\sigma:\RR_+ \rightarrow I$, $I$ is a finite set (of {\em modes}), $A_i$ and $L_i$ are matrices in $\RR^{n\times n}$, for $i\in I$. 
The partial differential equation associated with each mode is hyperbolic, meaning that for all $i\in I$, there exists an invertible matrix $S_i$ in $\RR^{n\times n}$ such that $L_i=S_i^{-1} \Lambda_i S_i$ where $\Lambda_i$ is a diagonal matrix in $\RR^{n\times n}$ satisfying
$\Lambda_i=\diag(\lambda_{i,1}, \ldots , \lambda_{i,n})$, with
$\lambda_{i,k} <0$ for $k\in\{1,\dots,m_i\}$ and $\lambda_{i,k}>0$ for $k\in \{m_i+1,\dots,n\}$. 
The matrices $S_i$ can be written as
\begin{equation}
\label{cp:8:11}
S_i=\left(
\begin{matrix} S_i^{-\top} &S_i^{+\top} \end{matrix}
\right)^\top 
\end{equation}
 where $S_i^-$ and $S_i^+$ are matrices in $\RR^{m_i\times n}$ and 
$\RR^{(n-m_i)\times n}$.
We define the matrices $F_i=S_i A_i S_i^{-1}$ and
$\Lambda_i^+=\diag(|\lambda_{i,1}|, \ldots , |\lambda_{i,n}|)$
for $i\in I$.
The boundary conditions are given by
\begin{equation}
\label{eq:sbound}
B^0_{\sigma(t)} w(t,0)+B^1_{\sigma(t)} w(t,1)=0, \quad t\ge 0
\end{equation}
where, for all $i\in I$, $B^0_i=G^0_i S_i$ and $B^1_i=G^1_i S_i$, $G^0_i$ and $G^1_i$ being matrices in $\RR^{n\times n}$ that satisfy
$$
G^0_i=
\left(
\begin{smallmatrix}
 -G_{i--} & 0_{m_i,n-m_i} \\
- G_{i+-}  & I_{n-m_i}
\end{smallmatrix}
\right),\;
G^1_i=
\left(
\begin{smallmatrix}
I_{m_i} & -G_{i-+} \\
0_{m_i,n-m_i} & - G_{i++} 
\end{smallmatrix}
\right).
$$
For $i\in I$, let us define the matrices in $\RR^{n\times n}$, $G_i=\left( \begin{smallmatrix}G_{i--} & G_{i-+}\\ G_{i+-} & G_{i++} \end{smallmatrix} \right)$.
We shall consider an initial condition given by 
\begin{equation}
\label{eq:sinit}
w(0,x)=w^0(x), \quad  x \in (0,1)
\end{equation}
where $w^0\in L^2((0,1);\RR^n)$.

A {\em switching signal} is a piecewise constant function $\sigma:\RR_+\rightarrow I$, right-continuous, and with a finite number of discontinuities on every bounded interval of $\RR_+$. This allows us to avoid the {\em Zeno behavior}, as described in \cite{liberzon:book:2003}.
The set of switching signals is denoted by $\mathcal{S}(\RR_+,I)$. The discontinuities of $\sigma$ are called {\em switching times}. 
The number of discontinuities of $\sigma$ on the interval $(\tau,t]$ is denoted by $N_\sigma(\tau,t)$. Following~\cite{HespanhaMorse:CDC:99}, 
for $\tau_D>0$, $N_0\in \NN$,
we denote by $\mathcal{S}_{\tau_D,N_0}(\RR_+,I)$ the set of switching signals verifying, for all $\tau<t$,
$
N_\sigma(\tau,t) \le N_0 +\frac{t-\tau}{\tau_D}.
$
The constant $\tau_D$ is called the {\em average dwell time} and $N_0$ the {\em chatter bound}. 

We first provide an existence and uniqueness result for the solutions of (\ref{eq:slin})-(\ref{eq:sinit}):

\begin{proposition}\label{prop:existence} For all $\sigma \in \mathcal{S}(\RR_+,I)$, $w^0\in L^2((0,1);\RR^n)$, there exists a unique (weak) solution $w\in C^0(\RR_+;L^2((0,1);\RR^n))$ to the initial value problem (\ref{eq:slin})-(\ref{eq:sinit}). 
\end{proposition}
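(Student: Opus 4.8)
The plan is to construct the solution by concatenation across the switching instants, reducing each constant-mode piece to the unswitched problem (\ref{eq:lin})-(\ref{eq:init}), whose well-posedness is already available. The starting point is the defining property of a switching signal: since $\sigma$ has only finitely many discontinuities on every bounded interval of $\RR_+$ (no Zeno behavior), its switching times can be enumerated as $0=t_0<t_1<t_2<\cdots$ with $t_k\to+\infty$, and on each half-open interval $[t_k,t_{k+1})$ the signal is constant, say $\sigma\equiv i_k\in I$.

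On a single interval $[t_k,t_{k+1})$ the dynamics are those of the fixed mode $i_k$, and I would use hyperbolicity $L_{i_k}=S_{i_k}^{-1}\Lambda_{i_k}S_{i_k}$ to perform the change of variables $y=S_{i_k}w$. A direct computation shows that $y$ solves $\partial_t y+\Lambda_{i_k}\partial_x y=F_{i_k}y$ with $F_{i_k}=S_{i_k}A_{i_k}S_{i_k}^{-1}$, and that the boundary relation (\ref{eq:sbound}), with $B^0_{i_k}=G^0_{i_k}S_{i_k}$ and $B^1_{i_k}=G^1_{i_k}S_{i_k}$, becomes after left-multiplication by $S_{i_k}^{-1}$ exactly the canonical form (\ref{eq:bound}) with matrix $G_{i_k}$ and $m=m_{i_k}$. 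Hence the piece on $[t_k,t_{k+1})$ is an instance of (\ref{eq:lin})-(\ref{eq:bound}), for which existence and uniqueness of a weak solution in $C^0$ with values in $L^2((0,1);\RR^n)$ hold (see the discussion following (\ref{eq:init})).

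Next I would glue the pieces inductively. Starting from $w^0\in L^2((0,1);\RR^n)$, the mode-$i_0$ problem yields a unique weak solution $w\in C^0([t_0,t_1];L^2((0,1);\RR^n))$; I take its endpoint value $w(t_1,\cdot)\in L^2((0,1);\RR^n)$ as the initial datum for the mode-$i_1$ problem on $[t_1,t_2]$, and so on. Because only finitely many switching times lie in any $[0,T]$, this produces a function $w$ defined on all of $\RR_+$ whose restriction to each $[t_k,t_{k+1}]$ is the unique weak solution of the corresponding constant-mode problem. Continuity across each switching time holds by construction, since the left endpoint value of one piece is the initial datum, hence the right endpoint value, of the next, so $w\in C^0(\RR_+;L^2((0,1);\RR^n))$. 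Uniqueness on $\RR_+$ follows by the same induction: any two weak solutions coincide on $[t_0,t_1]$ by uniqueness for mode $i_0$, hence share the value at $t_1$, hence coincide on $[t_1,t_2]$, and so forth.

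The main obstacle is not any single estimate but the careful verification that the concatenated function is genuinely a weak solution of the switched problem on the whole line, and in particular that it belongs to $C^0(\RR_+;L^2((0,1);\RR^n))$. The delicate points are that the weak $L^2$-solution framework imposes no compatibility condition at the switching instants (unlike classical $C^1$ solutions), so the $L^2$ endpoint value can be freely reused as the next initial datum, and that the change of variables $y=S_{i_k}w$ must be checked to map weak solutions to weak solutions and to carry the boundary operator $(B^0_{i_k},B^1_{i_k})$ onto the canonical form (\ref{eq:bound}).
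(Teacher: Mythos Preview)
Your proposal is correct and follows essentially the same approach as the paper: an inductive concatenation over the switching intervals $[t_k,t_{k+1})$, reducing each piece via the change of variables $y=S_{i_k}w$ to the diagonal unswitched problem (\ref{eq:lin})-(\ref{eq:bound}) and invoking the well-posedness result cited there. The only minor imprecisions are that you should allow for finitely many switching times (the paper takes $K\subseteq\NN$ possibly finite, rather than assuming $t_k\to+\infty$), and that the boundary relation $B^0_{i_k}w(t,0)+B^1_{i_k}w(t,1)=0$ becomes $G^0_{i_k}y(t,0)+G^1_{i_k}y(t,1)=0$ by direct substitution (since $B^l_{i_k}=G^l_{i_k}S_{i_k}$), not by left-multiplication by $S_{i_k}^{-1}$.
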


{
\begin{proof} We build iteratively the solution between successive switching times. Let $(t_k)_{k\in K}$ denote the increasing switching times of $\sigma$, with $t_0=0$ and $K$ be a (finite or infinite) subset of $\NN$.
Let us assume that we have been able to build a unique (weak) solution $w\in C^0([0,t_k];L^2((0,1);\RR^n))$ for some $k\ge 0$. Then, let $i_k$ be the value of 
$\sigma(t)$ for $t\in [t_k,t_{k+1})$. Let us introduce the following notation, for all $k$ in $K$ and for all $x$ in $[0,1]$,
\begin{equation}
\label{eq:change-var}
y_k(t,x)= S_{i_k} w(t,x), \quad t\in [t_k,t_{k+1}].
\end{equation}
Note that closed time intervals are used on both sides due to technical reasons in this proof.
Then, (\ref{eq:slin}) gives that, for all $k$ in $K$, $y_k$ satisfies the following partial differential equation
\begin{equation}
\label{eq:lin_k}
\partial_t y_k(t,x)+\Lambda_{i_k}  \partial_ x
  y_k(t,x) = F_{i_k}y_k(t,x), \quad  x \in[0,1],\; t\in [t_k,t_{k+1}]\ .
\end{equation}
Also, we use the notations $y_k=\left(\begin{smallmatrix} y_k^-\\y_k^+\end{smallmatrix}\right)$, 
where $y_k^-:\RR_+\times [0,1] \rightarrow\RR^{m_{i_k}}$ and $y_k^+:\RR_+\times [0,1] \rightarrow\RR^{n-m_{i_k}}$.
The boundary conditions (\ref{eq:sbound}) give, for all $k$ in $K$,
 \begin{equation}\label{eq:bound_k}
  \left(\begin{smallmatrix}
      y_k^-(t,1) \\
      y_k^+(t,0) \\
    \end{smallmatrix}
  \right)
  =G_{i_k}
  \left(\begin{smallmatrix}
      y_k^-(t,0) \\
      y_k^+ (t,1) \\
    \end{smallmatrix}
  \right), \quad   t\in [t_k,t_{k+1}]\ .
\end{equation}
The initial condition ensuring the continuity of $w$ at time $t_k$ is the following:
\begin{equation}
\label{eq:init_k}
y_{k}(t_k,x)=S_{i_k} w(t_k,x), \quad  x \in (0,1)\ .
\end{equation}
It follows from~\cite{DiagneBastinCoron:automatica:11} that, for all $k$ in $K$, there exists a unique (weak) solution 
\\
$y_{k} \in C^0([t_k,t_{k+1}];L^2((0,1);\RR^n))$ to the initial value problem (\ref{eq:lin_k})-(\ref{eq:init_k}).
Then, we can extend the (weak) solution to the initial value problem (\ref{eq:slin})-(\ref{eq:sinit}), from the initial time $t_k$, up to the switching time $t_{k+1}$;
(\ref{eq:init_k}) ensures that $w\in C^0([0,t_{k+1}];L^2((0,1);\RR^n))$, and the uniqueness of $y_k$ ensures that $w$ is the unique solution.
Finally, since there are only a finite number of switching times on every bounded intervals of $\RR_+$, the solution can be defined for all times,
resulting on a unique solution $w\in C^0(\RR_+;L^2((0,1);\RR^n))$.
\end{proof}}{}

\section{Stability of switched linear hyperbolic systems}\label{sec:4}

Let $\mathcal S\subseteq \mathcal{S}(\RR_+,I)$. The switched linear hyperbolic system (\ref{eq:slin})-(\ref{eq:sbound}) is said to be {\em globally uniformly exponentially stable} (GUES) 
with respect to the set of switching signals $\mathcal S$ if there exist $\nu >0$ and
$C>0$ such that, for every $w_0\in L^2((0,1);\RR^n)$, for every $\sigma \in \mathcal S$,
the solution to the initial value problem (\ref{eq:slin})-(\ref{eq:sinit}) satisfies
$
\|w(t,.)\|_{L^2((0,1);\RR^n)} \le C e^{-\nu t} \|w^0\|_{L^2((0,1);\RR^n)} $, $\forall t\in \RR_+ .
$
In this section, we provide sufficient conditions for the stability of switched linear hyperbolic systems.

\subsection{Mode independent sign structure of characteristics}\label{sec:4.1}

Assume first that the number of negative and positive characteristics of the linear partial differential equations associated with each mode is constant, that is 
for all $i\in I$, $m_i=m$.

We provide a first result giving sufficient conditions such that stability holds for all switching signals. \startchris\ifthenelse{\boolean{arxivbool}}{The proof is based on a common Lyapunov function equivalent to the $L^2$ norm}{See \cite{Prieur:Girard:Witrant:arxiv:2013} for a proof where a common Lyapunov function equivalent to the $L^2$-norm is used}. An alternative proof can be obtained by checking some semigroup properties and by using \cite{HanteSigalotti:SIAM:11} (where the equivalence is shown between the existence of a common Lyapunov function commensurable
with the squared norm and the global uniform exponential stability).\stopchris

\begin{theorem}
\label{th:sstab1}
Let us assume that, for all $i\in I$, $m_i=m$ and that there exist $\nu>0$, $\mu\in \RR$ and diagonal positive definite matrices $Q_i$ in $\RR^{n\times n}$, $i\in I$ such that the following matrix inequalities hold, for all $i\in I$ and for all 
$x$ in $[0,1]$,
\begin{equation}
\label{eq:s1lmi1}
-2 \mu \mathcal{Q}_i(x)\Lambda_i^+ + F_i^\top \mathcal{Q}_i(x)+\mathcal{Q}_i(x) F_i \le -2\nu \mathcal{Q}_i(x),
\end{equation}
\begin{equation}
\label{eq:s1lmi2}
 \left(\begin{matrix}
 I_{m}& 0_{m,n-m} \\
      G_{i+-}& G_{i++}
    \end{matrix}
  \right)^\top
  \mathcal{Q}_i(0) \Lambda_i\left(\begin{matrix}
 I_{m}& 0_{m,n-m} \\
      G_{i+-}& G_{i++}
    \end{matrix}
  \right)  
\leq   \left(\begin{matrix}
      G_{i--}& G_{i-+} \\
      0_{n-m,m}& I_{n-m} \\
    \end{matrix}
  \right)^\top
   \mathcal{Q}_i(1)\Lambda_i \left(\begin{matrix}
      G_{i--}& G_{i-+} \\
      0_{n-m,m}& I_{n-m} \\
    \end{matrix}
  \right),
\end{equation}
where $\mathcal{Q}_i(x) =  \diag [e ^{2\mu x} Q_i  ^- ,e ^{-2\mu x} Q_i ^ +] $,
$Q_i=\left(
\begin{smallmatrix}
Q_i^- & 0 \\
0 & Q_i^+
\end{smallmatrix}
 \right)$, $Q_i^-$ and $Q_i^+$ are diagonal positive matrices in $\RR^{m_i\times m_i}$ and 
$\RR^{(n-m_i)\times (n-m_i)}$, together with the following matrix equalities, for all $i,j\in I$,
\begin{equation}
\label{eq:s1lmi3}
\begin{array}{c}
(S_i^+)^\top Q_i^+ S_i^+=(S_j^+)^\top Q_j^+ S_j^+ ,
\quad 
(S_i^-)^\top Q_i^- S_i^-=(S_j^-)^\top Q_j^- S_j^- .
\end{array}
\end{equation}
Then, the switched linear hyperbolic system (\ref{eq:slin})-(\ref{eq:sbound}) is GUES
with respect to the set of switching signals $\mathcal{S}(\RR_+,I)$.
\end{theorem}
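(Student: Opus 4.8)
The plan is to reduce to the unswitched case of Proposition~\ref{th:stab1} on each interval between consecutive switching times, and then to patch the resulting per-mode Lyapunov functions into a single common one using the equalities (\ref{eq:s1lmi3}). First I would reuse the construction from the proof of Proposition~\ref{prop:existence}: on each interval $[t_k,t_{k+1})$ where $\sigma\equiv i_k$, set $y_k=S_{i_k}w$, so that $y_k$ solves (\ref{eq:lin_k})--(\ref{eq:bound_k}) with data $\Lambda_{i_k}$, $F_{i_k}$, $G_{i_k}$. Since each $Q_i$ is diagonal and $\Lambda_i$ is diagonal, the block matrix $\mathcal{Q}_i(x)=\diag[e^{2\mu x}Q_i^-,e^{-2\mu x}Q_i^+]$ commutes with $\Lambda_i$, so hypotheses (\ref{eq:s1lmi1})--(\ref{eq:s1lmi2}) are exactly the assumptions of Proposition~\ref{th:stab1} for mode $i_k$. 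Hence, writing $V_i(y)=\int_0^1 y(x)^\top \mathcal{Q}_i(x)y(x)\,dx$, the computation in the proof of Proposition~\ref{th:stab1} gives $\frac{d}{dt}V_{i_k}(y_k(t,\cdot))\le -2\nu\,V_{i_k}(y_k(t,\cdot))$ on each such interval.

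The key step is to express these per-mode functions in the original coordinate $w$ and to observe that (\ref{eq:s1lmi3}) makes them coincide. Writing $S_i=\big(\begin{smallmatrix}S_i^-\\S_i^+\end{smallmatrix}\big)$, one computes $S_i^\top\mathcal{Q}_i(x)S_i=e^{2\mu x}(S_i^-)^\top Q_i^-S_i^-+e^{-2\mu x}(S_i^+)^\top Q_i^+S_i^+$, and by (\ref{eq:s1lmi3}) the matrices $P^-:=(S_i^-)^\top Q_i^-S_i^-$ and $P^+:=(S_i^+)^\top Q_i^+S_i^+$ are independent of $i$. I would therefore define the common Lyapunov function $V(w)=\int_0^1 w(x)^\top\mathcal{P}(x)w(x)\,dx$ with $\mathcal{P}(x)=e^{2\mu x}P^-+e^{-2\mu x}P^+$, which by construction satisfies $V(w)=V_i(S_iw)$ for every $i$, so that $V(w(t,\cdot))=V_{i_k}(y_k(t,\cdot))$ on $[t_k,t_{k+1})$.

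Next I would assemble the global estimate. Since $w\in C^0(\RR_+;L^2((0,1);\RR^n))$ and $V$ is one fixed quadratic form not depending on $\sigma$, the map $t\mapsto V(w(t,\cdot))$ is continuous across every switching time; combined with the per-interval inequality $\frac{d}{dt}V(w(t,\cdot))\le -2\nu\,V(w(t,\cdot))$ this yields $V(w(t,\cdot))\le e^{-2\nu t}V(w^0)$ for all $t\in\RR_+$. Finally, each $S_i$ is invertible and each $Q_i$ is positive definite, so $\mathcal{P}(x)=S_i^\top\mathcal{Q}_i(x)S_i$ is symmetric positive definite for every $x\in[0,1]$; being continuous on the compact $[0,1]$, it admits bounds $\alpha^2 I_n\le\mathcal{P}(x)\le\beta^2 I_n$ with $0<\alpha\le\beta$ independent of $\sigma$, whence $\alpha\|w\|_{L^2((0,1);\RR^n)}\le\sqrt{V(w)}\le\beta\|w\|_{L^2((0,1);\RR^n)}$. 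This gives the GUES estimate with constant $C=\beta/\alpha$ and rate $\nu$, both uniform in $\sigma$.

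The main obstacle is precisely the patching in the second paragraph. The per-mode functions $V_i$ live in different transformed coordinates, and without a compatibility condition they would yield a $\sigma$-dependent weight in the $w$-coordinate whose jumps at switching times could increase the energy and destroy uniform decay. The role of (\ref{eq:s1lmi3}) is exactly to force the $w$-weight $\mathcal{P}(x)$ to be mode-independent, so that no positive jump occurs across switches and a single common Lyapunov function controls the switched dynamics uniformly over all of $\mathcal{S}(\RR_+,I)$; recognizing that (\ref{eq:s1lmi3}) is the correct compatibility condition is the crux, while the per-mode decay and the norm equivalence are routine consequences of Proposition~\ref{th:stab1}.
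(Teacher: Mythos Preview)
Your proof is correct and follows essentially the same approach as the paper: define the common Lyapunov function $V(w)=\int_0^1 w^\top\mathcal{P}(x)w\,dx$ with $\mathcal{P}(x)=e^{2\mu x}P^-+e^{-2\mu x}P^+$ (the paper writes $\mathcal{M},M^\pm$), observe via (\ref{eq:s1lmi3}) that this coincides with $V_{i_k}(y_k)$ on each inter-switch interval, invoke the computation of Proposition~\ref{th:stab1} to get exponential decay between switches, and use continuity of $w$ together with mode-independence of $V$ to glue across switching times. Your explicit identity $S_i^\top\mathcal{Q}_i(x)S_i=\mathcal{P}(x)$ and the justification of the norm equivalence from positive definiteness of $\mathcal{P}(x)$ are slightly more detailed than the paper, but the argument is the same.
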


\ifthenelse{\boolean{arxivbool}}{\begin{proof}  
\startchris Given the diagonal matrices $Q_i$ satisfying the assumptions of Theorem \ref{th:sstab1}, \stopchris let $M^-=(S_i^-)^\top Q_i^- S_i^-$ and $M^+=(S_i^+)^\top Q_i^+ S_i^+$, by (\ref{eq:s1lmi3}),
these matrices do not depend on the index $i\in I$.
The proof is based on the use of a common Lyapunov function given by,  for all $w$ in $L^2((0,1);\RR^n)$,
$$
V(w)=\int_0^1 w(x)^\top \mathcal{M}(x) w(x)\ dx,
$$
where $\mathcal{M}(x) = e^{2\mu x}M^-+ e^{-2\mu x}M^+$. 
Let $(t_k)_{k\in K}$ denote the increasing switching times of $\sigma$, with $t_0=0$ and $K$
a (finite or infinite) subset of $\NN$.
For $k\in K$, let $i_k$ be the value of 
$\sigma(t)$ for $t\in [t_k,t_{k+1})$, and let $y_k$ be given by (\ref{eq:change-var}). It thus
satisfies the boundary conditions (\ref{eq:bound_k}).
Let us remark that, due to (\ref{cp:8:11}) and (\ref{eq:init_k}), for $t\in [t_k,t_{k+1})$, $V$ can be written as:
$$
V(w(t,.))=\int_0^1 y_k(t,x)^\top \mathcal{Q}_{i_k}(x) y_k(t,x) \ dx, 
$$
where $\mathcal{Q}_{i_k}(x) =\diag[e^{2\mu x}Q_{i_k}^-,e^{-2\mu x}Q_{i_k}^+ ]$. \startchris Note that $\mathcal{Q}_{i_k}(x)$ commute with $\Lambda$ since these matrices are diagonal. \stopchris
Using (\ref{eq:s1lmi1}) and (\ref{eq:s1lmi2}) and following the proof of Proposition~\ref{th:stab1}, we obtain that, along the solutions of  (\ref{eq:slin})-(\ref{eq:sbound}), it holds, for all $k$ in $K$,
\begin{equation}
\label{eq:lyapt}
\forall t\in [t_k,t_{k+1}),\; V(w(t,.)) \le V(w(t_k,.))e^{-2\nu(t-t_k)}.
\end{equation}
Moreover, $V(w(t,.))$ is continuous at the switching time $t_{k+1}$, thus
\begin{equation}
\label{eq:lyapk}
\forall k\in K,\; V(w(t_{k+1},.)) \le V(w(t_k,.))e^{-2\nu(t_{k+1}-t_k)}.
\end{equation}
Equations (\ref{eq:lyapt}) and (\ref{eq:lyapk}) allow us to prove that, for all $t\in \RR_+$, 
$V(w(t,.)) \le V(w^0) e^{-2\nu t}$.
By noting that there exist $\alpha>0$, $\beta>0$ such that
$
\alpha \|w(t,.)\|_{L^2((0,1);\RR^n)} \le \sqrt{V(w(t,.))} \le \beta \|w(t,.)\|_{L^2((0,1);\RR^n)}$,
we obtain that, for all $t\in \RR_+$,
$
 \|w(t,.)\|_{L^2((0,1);\RR^n)} \le \frac{\beta}{\alpha} e^{-\nu t} \|w^0\|_{L^2((0,1);\RR^n)}
$. This concludes the proof of Theorem \ref{th:sstab1}.
\end{proof}}{}

The numerical computation of the unknown variables, satisfying the sufficient conditions of Theorem \ref{th:sstab1}, is explained in \ifthenelse{\boolean{arxivbool}}{Section \ref{sec:comp} below}{\cite{Prieur:Girard:Witrant:arxiv:2013}}.

For systems that do not satisfy the assumptions of the previous theorem, but whose dynamics in each mode satisfy independently 
the assumptions of Proposition~\ref{th:stab1} (i.e. the dynamics in each mode is stable), it is possible to show that the system
is stable provided that the switching is slow enough:

\begin{theorem}
\label{th:sstab2}
Let us assume that, for all $i\in I$, $m_i=m$ and that there exist $\nu>0$, $\gamma\ge 1$, $\mu_i\in \RR$, diagonal positive definite matrices $Q_i$ in $\RR^{n\times n}$, such that the following matrix inequalities hold, for all 
$x$ in $[0,1]$,
\begin{equation}
\label{eq:s2lmi1}
-2 \mu_i \mathcal{Q}_i(x)\Lambda_i^+ + F_i^\top \mathcal{Q}_i(x)+\mathcal{Q}_i(x) F_i \le -2\nu \mathcal{Q}_i(x),
\end{equation}
\begin{equation}
\label{eq:s2lmi2}
 \left(\begin{matrix}
 I_{m}& 0_{m,n-m} \\
      G_{i+-}& G_{i++}
    \end{matrix}
  \right)^\top
  \mathcal{Q}_i(0) \Lambda_i\left(\begin{matrix}
 I_{m}& 0_{m,n-m} \\
      G_{i+-}& G_{i++}
    \end{matrix}
  \right)  
\leq   \left(\begin{matrix}
      G_{i--}& G_{i-+} \\
      0_{n-m,m}& I_{n-m} \\
    \end{matrix}
  \right)^\top
   \mathcal{Q}_i(1)\Lambda_i \left(\begin{matrix}
      G_{i--}& G_{i-+} \\
      0_{n-m,m}& I_{n-m} \\
    \end{matrix}
  \right),
\end{equation}
where $\mathcal{Q}_i(x) =  \diag [e ^{2\mu_i x} Q_i  ^- ,e ^{-2\mu_i x} Q_i ^ +] $,
$Q_i=\left(
\begin{smallmatrix}
Q_i^- & 0 \\
0 & Q_i^+
\end{smallmatrix}
 \right)$, $Q_i^-$ and $Q_i^+$ are diagonal positive matrices in $\RR^{m_i\times m_i}$ and 
$\RR^{(n-m_i)\times (n-m_i)}$, together with the following matrix inequalities, for all $i,j\in I$,
\begin{equation}
\label{eq:s2lmi3}
(S_i^+)^\top Q_i^+ S_i^+\le \gamma (S_j^+)^\top Q_j^+ S_j^+ ,
\end{equation}
\begin{equation}
\label{eq:s2lmi4}
(S_i^-)^\top Q_i^- S_i^-\le \gamma (S_j^-)^\top Q_j^- S_j^- .
\end{equation}

Let $\Delta_\mu=\max(\mu_1,\dots,\mu_n)-\min(\mu_1,\dots,\mu_n)$,
then, for all $N_0\in \NN$, for all $\tau_D> \frac{\ln(\gamma)}{2\nu}+\frac{\Delta_\mu}{\nu}$,
the switched linear hyperbolic system (\ref{eq:slin})-(\ref{eq:sbound}) is GUES
with respect to the set of switching signals $\mathcal{S}_{\tau_D,N_0}(\RR_+,I)$.
\end{theorem}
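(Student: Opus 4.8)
The plan is to adapt the multiple-Lyapunov-function method with average dwell time of \cite{HespanhaMorse:CDC:99} to the present infinite-dimensional setting, reusing the per-mode decay estimate already established in the proofs of Proposition~\ref{th:stab1} and Theorem~\ref{th:sstab1}. For each mode $i\in I$ I would introduce the mode-dependent Lyapunov function $V_i(w)=\int_0^1 w(x)^\top \mathcal{M}_i(x)w(x)\,dx$, where $\mathcal{M}_i(x)=e^{2\mu_i x}M_i^- + e^{-2\mu_i x}M_i^+$ with $M_i^\pm=(S_i^\pm)^\top Q_i^\pm S_i^\pm$. After the change of variables $y_k=S_{i_k}w$ of (\ref{eq:change-var}), one has $V_{i_k}(w(t,\cdot))=\int_0^1 y_k(t,x)^\top \mathcal{Q}_{i_k}(x)y_k(t,x)\,dx$, exactly as in the proof of Theorem~\ref{th:sstab1}. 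Consequently the matrix inequalities (\ref{eq:s2lmi1})--(\ref{eq:s2lmi2}) yield, on each interval $[t_k,t_{k+1})$ on which $\sigma\equiv i_k$, the decay estimate $V_{i_k}(w(t,\cdot))\le V_{i_k}(w(t_k,\cdot))\,e^{-2\nu(t-t_k)}$.

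The key new ingredient is a uniform comparison between the Lyapunov functions of two distinct modes, needed to bound the jump incurred at each switching time. Using (\ref{eq:s2lmi3})--(\ref{eq:s2lmi4}), namely $M_i^\pm\le\gamma M_j^\pm$, together with the elementary bounds $e^{2(\mu_i-\mu_j)x}\le e^{2\Delta_\mu}$ and $e^{-2(\mu_i-\mu_j)x}\le e^{2\Delta_\mu}$, valid for all $x\in[0,1]$ since $|\mu_i-\mu_j|\le\Delta_\mu$, I would factor $e^{2\mu_i x}=e^{2(\mu_i-\mu_j)x}e^{2\mu_j x}$ (and symmetrically for the $+$ block) to obtain $\mathcal{M}_i(x)\le\gamma e^{2\Delta_\mu}\,\mathcal{M}_j(x)$, and hence $V_i(w)\le\chi\,V_j(w)$ for all $i,j\in I$, where $\chi:=\gamma e^{2\Delta_\mu}\ge 1$. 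Since $w(t,\cdot)$ is continuous across a switching time $t_{k+1}$, this gives $V_{i_{k+1}}(w(t_{k+1},\cdot))\le\chi\,V_{i_k}(w(t_{k+1},\cdot))$. This step, and in particular the bookkeeping of the two exponential weights $e^{\pm 2\mu_i x}$ under a change of mode, is where I expect the main difficulty to lie: it is precisely the combination of the factor $\gamma$ (from the matrix inequalities) with $e^{2\Delta_\mu}$ (from the differing weights) that produces the sharp dwell-time threshold.

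Combining the per-mode decay with the jump estimate, I would track the active energy $V(t):=V_{\sigma(t)}(w(t,\cdot))$ and chain the inequalities over the switching times $t_0=0<t_1<\cdots$. Passing to the limit from the left at each $t_{k+1}$ yields $V(t_{k+1})\le\chi\,V(t_k)\,e^{-2\nu(t_{k+1}-t_k)}$, and iterating down to $t_0=0$ together with the intra-mode decay gives, for every $t\in\RR_+$, the bound $V(t)\le\chi^{N_\sigma(0,t)}\,V(0)\,e^{-2\nu t}$ with $V(0)=V_{i_0}(w^0)$. Invoking the average dwell-time estimate $N_\sigma(0,t)\le N_0+t/\tau_D$ and $\chi\ge 1$ then gives $V(t)\le\chi^{N_0}\,V(0)\,e^{-(2\nu-\ln\chi/\tau_D)t}$. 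The hypothesis $\tau_D>\frac{\ln\gamma}{2\nu}+\frac{\Delta_\mu}{\nu}=\frac{\ln\chi}{2\nu}$ is exactly what makes the exponent $2\nu-\ln\chi/\tau_D$ strictly positive.

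Finally, since $I$ is finite, each $M_i^\pm$ is symmetric positive definite and $x$ ranges over the compact interval $[0,1]$, there exist constants $\alpha,\beta>0$, independent of $i$, such that $\alpha\|w\|_{L^2((0,1);\RR^n)}^2\le V_i(w)\le\beta\|w\|_{L^2((0,1);\RR^n)}^2$ for all $i\in I$. Substituting these two-sided bounds into the previous estimate yields, uniformly over all $\sigma\in\mathcal{S}_{\tau_D,N_0}(\RR_+,I)$, the inequality $\|w(t,\cdot)\|_{L^2((0,1);\RR^n)}\le C\,e^{-\nu' t}\|w^0\|_{L^2((0,1);\RR^n)}$ with rate $\nu'=\nu-\frac{\ln\chi}{2\tau_D}>0$ and constant $C=\sqrt{\beta\chi^{N_0}/\alpha}$, which establishes GUES with respect to $\mathcal{S}_{\tau_D,N_0}(\RR_+,I)$ and completes the proof.
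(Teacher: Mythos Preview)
Your proposal is correct and follows essentially the same route as the paper: mode-dependent Lyapunov functions $V_i(w)=\int_0^1 w^\top\mathcal{M}_i(x)w\,dx$, per-mode exponential decay from (\ref{eq:s2lmi1})--(\ref{eq:s2lmi2}) via Proposition~\ref{th:stab1}, the jump bound $V_{i_{k+1}}\le \gamma e^{2\Delta_\mu}\,V_{i_k}$ at switching times from (\ref{eq:s2lmi3})--(\ref{eq:s2lmi4}) combined with the weight comparison $e^{\pm 2(\mu_i-\mu_j)x}\le e^{2\Delta_\mu}$, and the standard average-dwell-time chaining. One small slip: the matrices $M_i^\pm=(S_i^\pm)^\top Q_i^\pm S_i^\pm$ are only positive \emph{semi}definite (they have rank $m$ and $n-m$ respectively), but this is harmless since $\mathcal{M}_i(x)=S_i^\top\mathcal{Q}_i(x)S_i$ is positive definite, which is all you need for the two-sided $L^2$ bounds.
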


\begin{proof} 
Let $(t_k)_{k\in K}$ denote the increasing switching times of $\sigma$, with $t_0=0$ and $K$ is a (finite or infinite) subset of $\NN$.
For $k\in K$, let $i_k$ be the value of 
$\sigma(t)$ for $t\in [t_k,t_{k+1})$, and let $y_k$ be given by (\ref{eq:change-var}). It satisfies the boundary conditions (\ref{eq:bound_k}). \stopchris
\startchris Given the diagonal matrices $Q_i$ satisfying the assumptions of Theorem \ref{th:sstab2}, \stopchris
let $M_i^-=(S_i^-)^\top Q_i^- S_i^-$ and $M_i^+=(S_i^+)^\top Q_i^+ S_i^+$. 
\stopchris 
The proof is based on the use of multiple Lyapunov functions. More precisely, denoting $\mathcal{M}_{i_k}(x) = e^{2\mu_{i_k} x}M_{i_k}^- + e^{-2\mu_{i_k} x}M_{i_k}^+$, let us define, for all $w$ in $C^0([0,\infty); L^2((0,1);\RR^n))$, for all $t$ in $\RR_+$,
\begin{equation}\label{21bis}
V(w(t,.))=\int_0^1 w(t,x)^\top \mathcal{M}_{i_k}(x) w(t,x)\ dx, \mbox{ if } t\in[t_k,t_{k+1}) 
\end{equation}
which may be rewritten as
$
V(w(t,.))=\int_0^1 y_k(t,x)^\top \mathcal{Q}_ {i_k}(x) y_k(t,x)\ dx$, if $t\in[t_k,t_{k+1}) 
$.

\startchris Note that $\mathcal{Q}_{i_k}(x)$ commute with $\Lambda_{i_k}$ since these matrices are diagonal. \stopchris
Using (\ref{eq:s2lmi1}) and (\ref{eq:s2lmi2}), 
and following the proof of Proposition \ref{th:sstab1}, 
we get that, along the solutions of (\ref{eq:slin})-(\ref{eq:sbound}), 
\begin{equation}
\label{eq:lyapt:2}
\forall k \in K, \forall t\in [t_k,t_{k+1}),\; V(w(t,.)) \le V(w(t_k,.))e^{-2\nu(t-t_k)}.
\end{equation}
The function $V$ may be not continuous at the switching times any more. Nevertheless, by (\ref{eq:s2lmi3}) and (\ref{eq:s2lmi4}), we have that, for all $k$ in $K$,
\begin{eqnarray*}
V(w(t_{k+1},.))& = & \int_0^1 \left(w(t_{k+1},x)^\top M_{i_{k+1}}^- w(t_{k+1},x)e^{2\mu_{i_{k+1}} x}
 +
w(t_{k+1},x)^\top M_{i_{k+1}}^+ w(t_{k+1},x)e^{-2\mu_{i_{k+1}} x} \right)   dx\\
 & \le  &  \gamma \int_0^1 \left(w(t_{k+1},x)^\top M_{i_{k}}^- w(t_{k+1},x)e^{2\mu_{i_{k+1}} x}
 +
w(t_{k+1},x)^\top M_{i_{k}}^+ w(t_{k+1},x)e^{-2\mu_{i_{k+1}} x} \right)   dx\\
& \le & \gamma e^{2\Delta_\mu} \int_0^1 \left(w(t_{k+1},x)^\top M_{i_{k}}^- w(t_{k+1},x)e^{2\mu_{i_{k}} x} +
w(t_{k+1},x)^\top M_{i_{k}}^+ w(t_{k+1},x)e^{-2\mu_{i_{k}} x} \right)  dx \\
& \le & \gamma e^{2\Delta_\mu} \lim_{t\rightarrow t_{k+1}^-} V(w(t,.))
\end{eqnarray*}
where the continuity of $w$ is used in the last inequality (it follows from Proposition \ref{prop:existence}). Then, it follows from (\ref{eq:lyapt:2}) that, for all $k$ in $K$, $V(w(t_{k+1},.)) \le \gamma e^{2\Delta_\mu} V(w(t_k,.))e^{-2\nu(t_{k+1}-t_k)}$,
and it allows us to 
prove recursively that, for all $ t\in\RR_+$,
$
V(w(t,.)) \le \left(\gamma e^{2\Delta_\mu}\right)^{N_\sigma(0,t)} V(w^0)e^{-2\nu t} \le
 \left(\gamma e^{2\Delta_\mu}\right)^{(N_0+\frac{t}{\tau_D})} V(w^0)e^{-2\nu t}.
$
Let $\bar\nu=\nu-\frac{\Delta_\mu}{\tau_D}-\frac{\ln(\gamma)}{2\tau_D}$, the assumption on the average dwell time gives that $\bar \nu>0$ and the previous inequality yields
$
\forall t\in\RR_+,\; V(w(t,.)) \le 
 \left(\gamma e^{2\Delta_\mu}\right)^{N_0} V(w^0)e^{-2\bar \nu t}
$
which allows us to conclude that the switched linear hyperbolic system is GUES
with respect to the set of switching signals $\mathcal{S}_{\tau_D,N_0}(\RR_+,I)$. This concludes the proof of Theorem \ref{th:sstab2}.
\end{proof}

\begin{remark}
Setting $\gamma=1$ and $\mu_i=\mu$ for all $i\in I$, we recover the assumptions of Theorem~\ref{th:sstab1}.
In that case we have $\Delta_\mu=0$: there is no positive lower bound imposed on the average dwell time, which is consistent with Theorem~\ref{th:sstab1}. 
\end{remark}
\startchris
\begin{remark}
Note that the existence of $\gamma\geq 1$ such that (\ref{eq:s2lmi3}) holds is equivalent to $\ker (S_i^+)=\ker (S_j^+)$, for all 
$i,j\in I$. Therefore the existence of $\gamma\geq 1$ such that (\ref{eq:s2lmi3}) and (\ref{eq:s2lmi4}) are satisfied is equivalent to $\ker (S_i^+)=\ker (S_j^+)$ and $\ker (S_i^-)=\ker (S_j^-)$, for all 
$i,j\in I$ (and also, by recalling $L_i=S_i^{-1} \Lambda_i S_i$, the subspace associated with all positive (resp. negative) eigenvalues of $L_i$ does not depend on $i$). If this condition does not hold, stability can still be analyzed using other stability results presented in the following section. 
\end{remark}
\stopchris
\subsection{Mode dependent sign structure of characteristics}
\label{sec:stabedep}

We now relax the assumption on the number of negative and positive characteristics. 
As in the previous section, we provide a first result giving sufficient conditions such that stability holds for all switching signals\ifthenelse{\boolean{arxivbool}}{:}{ (see \cite{Prieur:Girard:Witrant:arxiv:2013} for the proof):}
\begin{theorem}
\label{th:sstab3}
Let us assume that there exist $\nu>0$ and diagonal positive definite matrices $Q_i$ in $\startchris \RR^{n\times n}\stopchris$, $i\in I$ 
such that, for all $i\in I$, 
\begin{equation}
\label{eq:s3lmi1}
F_i^\top Q_i +Q_i F_i \le -2\nu Q_i,
\end{equation}
\begin{equation}
\label{eq:s3lmi2}
G_i^TQ_i\Lambda_i^+G_i \le 
Q_i\Lambda^+_i,
\end{equation}
and such that, for all $i,j\in I$,
\begin{equation}
\label{eq:s3lmi3}
S_i^\top Q_iS_i=S_j^\top Q_jS_j .
\end{equation}
Then, the switched linear hyperbolic system (\ref{eq:slin})-(\ref{eq:sbound}) is GUES
with respect to the set of switching signals $\mathcal{S}(\RR_+,I)$.
\end{theorem}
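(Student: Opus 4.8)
The plan is to build a single mode-independent Lyapunov function equivalent to the squared $L^2$-norm, show it decreases at rate $2\nu$ within each mode, and observe that it is continuous across switching times; since the function is common to all modes, no dwell-time constraint is needed and GUES holds for every $\sigma\in\mathcal{S}(\RR_+,I)$. The starting point is that (\ref{eq:s3lmi3}) makes $M:=S_i^\top Q_i S_i$ independent of $i\in I$; as each $S_i$ is invertible and each $Q_i$ is diagonal positive definite, $M$ is symmetric positive definite. I would therefore set
\[
V(w)=\int_0^1 w(x)^\top M w(x)\,dx,
\]
which satisfies $\alpha\|w\|_{L^2((0,1);\RR^n)}\le\sqrt{V(w)}\le\beta\|w\|_{L^2((0,1);\RR^n)}$ for suitable $\alpha,\beta>0$.

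First I would reduce the per-mode analysis to Proposition~\ref{th:stab1}. On $[t_k,t_{k+1})$, writing $y_k=S_{i_k}w$ as in (\ref{eq:change-var}), the function $y_k$ solves (\ref{eq:lin_k})--(\ref{eq:bound_k}), and by (\ref{cp:8:11}) we have $V(w(t,.))=\int_0^1 y_k(t,x)^\top Q_{i_k}y_k(t,x)\,dx$. This is precisely the Lyapunov function of Proposition~\ref{th:stab1} specialized to $\mu=0$, in which case $\mathcal{Q}_{i_k}(x)\equiv Q_{i_k}$ is independent of $x$ and, being diagonal, commutes with $\Lambda_{i_k}$. Running the derivative computation of that proof therefore gives
\[
\dot V(w(t,.))=\Phi_{i_k}+\int_0^1 y_k^\top\!\left(F_{i_k}^\top Q_{i_k}+Q_{i_k}F_{i_k}\right)y_k\,dx,
\]
where $\Phi_{i_k}$ is the boundary quadratic form in $\left(y_k^-(t,0),y_k^+(t,1)\right)$ whose matrix is the left-hand side minus the right-hand side of (\ref{eq:s1lmi2}) evaluated at $\mu=0$.

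The one computation I would actually perform is to identify this boundary matrix with the one in (\ref{eq:s3lmi2}). With $\mathcal{Q}_{i_k}(0)=\mathcal{Q}_{i_k}(1)=Q_{i_k}$ and $\Lambda_{i_k}^+=|\Lambda_{i_k}|$, expanding the block products shows that the right-hand side minus the left-hand side of (\ref{eq:s1lmi2}) equals $Q_{i_k}\Lambda_{i_k}^+-G_{i_k}^\top Q_{i_k}\Lambda_{i_k}^+G_{i_k}$; this is routine entrywise block algebra. Hence $\Phi_{i_k}\le 0$ exactly by (\ref{eq:s3lmi2}), while (\ref{eq:s3lmi1}) bounds the interior integral by $-2\nu\int_0^1 y_k^\top Q_{i_k}y_k\,dx$. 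Combining, $\dot V(w(t,.))\le-2\nu V(w(t,.))$ on each interval, whence $V(w(t,.))\le V(w(t_k,.))e^{-2\nu(t-t_k)}$.

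It remains to splice the intervals. Because $M$ is mode-independent and $w\in C^0(\RR_+;L^2((0,1);\RR^n))$ by Proposition~\ref{prop:existence}, the map $t\mapsto V(w(t,.))$ is continuous, in particular at every switching time $t_{k+1}$, so no jump factor appears. Chaining the per-interval bounds yields $V(w(t,.))\le V(w^0)e^{-2\nu t}$ for all $t\in\RR_+$, and the norm equivalence gives $\|w(t,.)\|_{L^2((0,1);\RR^n)}\le\frac{\beta}{\alpha}e^{-\nu t}\|w^0\|_{L^2((0,1);\RR^n)}$, i.e.\ GUES with respect to $\mathcal{S}(\RR_+,I)$. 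The conceptual crux—and the reason the weight $\mu$ of the earlier theorems is absent here—is exactly this continuity step: when $m_i$ varies across modes the positive and negative blocks cannot be matched separately as in (\ref{eq:s1lmi3}), so a weighted function $e^{2\mu x}M^-+e^{-2\mu x}M^+$ would no longer be mode-independent; one is driven to $\mu=0$ together with the full matrix equality (\ref{eq:s3lmi3}) in order to guarantee continuity at the switches.
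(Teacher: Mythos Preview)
Your proposal is correct and follows essentially the same approach as the paper: a common Lyapunov function built from the mode-independent matrix $M=S_i^\top Q_iS_i$, reduction on each interval to the computation of Proposition~\ref{th:stab1} with $\mu=0$ (so that the boundary inequality becomes exactly (\ref{eq:s3lmi2})), and continuity of $V$ at switching times to splice the estimates. The paper writes the Lyapunov function directly in the $y_k$ variables and checks continuity via the change-of-coordinates identity $y_{k+1}^\top Q_{i_{k+1}}y_{k+1}=y_k^\top Q_{i_k}y_k$, but this is the same argument.
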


\ifthenelse{\boolean{arxivbool}}{\begin{proof} We use the same notations as in Theorem~\ref{th:sstab1}.  We consider the candidate Lyapunov function, for all $w$ in $C^0([0,\infty); L^2((0,1);\RR^n))$, for all $t\in\RR_+$,
$$
V(w(t,.))=\int_0^1 y_k(t,x)^\top Q_{i_k} y_k(t,x) dx, \quad \text{if } t\in [t_k,t_{k+1}),
$$
where we used the change of variable (\ref{eq:change-var}).
Using (\ref{eq:s3lmi1}) and (\ref{eq:s3lmi2}), and following the proof of Proposition~\ref{th:stab1}, we obtain that, along the solutions of  (\ref{eq:slin})-(\ref{eq:sbound}), it holds, for all $k$ in $K$, and for all $
 t\in [t_k,t_{k+1})$, $V(w(t,.)) \le V(w(t_k,.))e^{-2\nu(t-t_k)}.
$
Recalling (\ref{eq:change-var}), we have
$y_k(t_{k+1},.)=S_{i_k}w(t_{k+1},.)$ and $y_{k+1}(t_{k+1},.)=S_{i_{k+1}}w(t_{k+1},.)$, which gives
$y_{k+1}(t_{k+1},.)=S_{i_{k+1}}S^{-1}_{i_k}y_k(t_{k+1},.)$.
Hence, Equation (\ref{eq:s3lmi3}) yields, for all $k$ in $K$,
\begin{eqnarray*}
V(w(t_{k+1},.))& = & \int_0^1 y_{k+1}(t_{k+1},x)^\top Q_{i_{k+1}} y_{k+1}(t_{k+1},x) dx \\
 & = & \int_0^1 y_k(t_{k+1},x)^\top S^{-\top}_{i_k} S_{i_{k+1}}^\top Q_{i_{k+1}}S_{i_{k+1}}S^{-1}_{i_k}y_k(t_{k+1},x) dx \\
& = & \int_0^1 y_k(t_{k+1},x)^\top Q_{i_k} y_k(t_{k+1},x)  dx = \lim_{t\rightarrow t_{k+1}^-} V(w(t,.)).
\end{eqnarray*}
Thus, $V$ is continuous at the switching time $t_{k+1}$. The end of the proof is similar to that of Theorem~\ref{th:sstab1}.
\end{proof}}{}

The assumptions of the previous theorem are quite strong. To assure the asymptotic stability for switching signals with a sufficiently large dwell time, weaker assumptions are needed. More precisely, considering the assumptions of Theorem \ref{th:sstab2}, the last main result of this paper can be stated:
\begin{theorem}
\label{th:sstab4}
Let us assume that there exist $\nu>0$, $\gamma\geq 1$, $\mu_i\in \RR$, and diagonal positive definite matrices $Q_i$ in \startchris $\RR^{n\times n}$\stopchris, $i\in I$ 
such that the matrix inequalities (\ref{eq:s2lmi1}), (\ref{eq:s2lmi2}) hold \startchris (where the same notation for $\mathcal{Q}_i(x)$ is used) \stopchris together with the following matrix inequalities, for all $i,j\in I$,
\begin{equation}
\label{eq:s4lmi}
S_i^\top Q_i S_i \le \gamma S_j^\top Q_j S_j.
\end{equation}
Let $\bar\Delta_\mu=2|\mu_i|$ if $I$ is a singleton and $\bar\Delta_\mu=2\max_{i\neq j \in I}(|\mu_i|+|\mu_j|)$ else.
Then, for all $N_0\in \NN$, for all $\tau_D> \frac{\ln(\gamma)}{2\nu}+\frac{\bar\Delta_\mu}{\nu}$,
the switched linear hyperbolic system (\ref{eq:slin})-(\ref{eq:sbound}) is GUES
with respect to the set of switching signals $\mathcal{S}_{\tau_D,N_0}(\RR_+,I)$.
\end{theorem}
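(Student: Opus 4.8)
The plan is to reproduce the multiple-Lyapunov-function argument of Theorem~\ref{th:sstab2}; the only genuinely new point is that the number $m_i$ of negative characteristics now depends on the mode, so that the splitting of the density into a fixed ``$-$'' block $M^-$ and ``$+$'' block $M^+$ is no longer available and the jump of $V$ across a switching time must be estimated differently. First I would fix $\sigma\in\mathcal{S}_{\tau_D,N_0}(\RR_+,I)$, write its switching times as $(t_k)_{k\in K}$ with $t_0=0$, set $i_k=\sigma(t)$ for $t\in[t_k,t_{k+1})$, and pass to the diagonalising variables $y_k=S_{i_k}w$ of (\ref{eq:change-var}), which solve (\ref{eq:lin_k})--(\ref{eq:bound_k}) on $[t_k,t_{k+1}]$. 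As in (\ref{21bis}) I would use the mode-dependent Lyapunov function $V(w(t,.))=\int_0^1 y_k(t,x)^\top \mathcal{Q}_{i_k}(x)y_k(t,x)\,dx$ for $t\in[t_k,t_{k+1})$. On each such interval the mode is frozen and $\mathcal{Q}_{i_k}(x)$ is diagonal, hence commutes with $\Lambda_{i_k}$; the interior-plus-boundary computation of Proposition~\ref{th:stab1}, applied verbatim to mode $i_k$ with its own dimension $m_{i_k}$ and using (\ref{eq:s2lmi1})--(\ref{eq:s2lmi2}), then gives the strict decay $V(w(t,.))\le V(w(t_k,.))e^{-2\nu(t-t_k)}$ for all $t\in[t_k,t_{k+1})$.

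The main obstacle is the estimate of $V$ at a switching time $t_{k+1}$, where $V$ is now discontinuous because (\ref{eq:s4lmi}) is only an inequality and because the weights carry the exponential factors $e^{\pm 2\mu_{i}x}$. Writing $Q_i=\diag[Q_i^-,Q_i^+]$, a block-by-block comparison over $x\in[0,1]$ yields the sandwich $e^{-2|\mu_i|}Q_i\le \mathcal{Q}_i(x)\le e^{2|\mu_i|}Q_i$. I would use the upper bound for the incoming mode $i_{k+1}$, then (\ref{eq:s4lmi}) in the $w$-coordinates (recall $y_k(t_{k+1},.)=S_{i_k}w(t_{k+1},.)$), then the lower bound for the outgoing mode $i_k$, to get the pointwise inequality $S_{i_{k+1}}^\top\mathcal{Q}_{i_{k+1}}(x)S_{i_{k+1}}\le \gamma\,e^{2(|\mu_{i_k}|+|\mu_{i_{k+1}}|)}\,S_{i_k}^\top\mathcal{Q}_{i_k}(x)S_{i_k}$. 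Integrating and using the continuity of $w$ at $t_{k+1}$ (Proposition~\ref{prop:existence}) gives $V(w(t_{k+1},.))\le \gamma\,e^{2(|\mu_{i_k}|+|\mu_{i_{k+1}}|)}\lim_{t\to t_{k+1}^-}V(w(t,.))$. Since a genuine switch has $i_k\neq i_{k+1}$, the exponent is bounded by $\bar\Delta_\mu$, so the uniform jump factor is $\gamma e^{\bar\Delta_\mu}$; in the singleton case $\sigma$ is constant, there are no switches, and the claim already follows from Proposition~\ref{th:stab1}.

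Finally I would close with the average-dwell-time recursion of Theorem~\ref{th:sstab2}. Chaining the within-mode decay and the jump factor over the $N_\sigma(0,t)$ switches in $(0,t]$ gives $V(w(t,.))\le (\gamma e^{\bar\Delta_\mu})^{N_\sigma(0,t)}V(w^0)e^{-2\nu t}$, and $N_\sigma(0,t)\le N_0+t/\tau_D$ yields $V(w(t,.))\le (\gamma e^{\bar\Delta_\mu})^{N_0}V(w^0)e^{-2\bar\nu t}$ with $\bar\nu=\nu-\frac{\ln\gamma}{2\tau_D}-\frac{\bar\Delta_\mu}{2\tau_D}$. The hypothesis $\tau_D>\frac{\ln\gamma}{2\nu}+\frac{\bar\Delta_\mu}{\nu}$ is (more than) enough to ensure $\bar\nu>0$, and the norm equivalence $\alpha\|w(t,.)\|_{L^2((0,1);\RR^n)}\le \sqrt{V(w(t,.))}\le \beta\|w(t,.)\|_{L^2((0,1);\RR^n)}$, with $\alpha,\beta>0$ depending on the spectra of the $Q_i^\pm$ and on the $\mu_i$, then converts this into the GUES estimate uniformly over $\mathcal{S}_{\tau_D,N_0}(\RR_+,I)$. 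The delicate step throughout is the jump estimate: unlike Theorem~\ref{th:sstab2}, the differing sign structures prevent transporting the two blocks separately, forcing one to pass through the unweighted $Q_i$ and to pay the cost $e^{2(|\mu_{i_k}|+|\mu_{i_{k+1}}|)}$ encoded in $\bar\Delta_\mu$.
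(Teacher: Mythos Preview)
Your proposal is correct and follows essentially the same route as the paper: the same mode-dependent Lyapunov function (\ref{21bis}), the same within-mode decay via Proposition~\ref{th:stab1}, and the same jump estimate at switching times obtained by sandwiching $\mathcal{Q}_i(x)$ between $e^{-2|\mu_i|}Q_i$ and $e^{2|\mu_i|}Q_i$ and then applying (\ref{eq:s4lmi}) in the $w$-coordinates. Your jump factor $\gamma e^{\bar\Delta_\mu}$ is in fact slightly sharper than the paper's $\gamma e^{2\bar\Delta_\mu}$, and you correctly observe that the stated dwell-time hypothesis is then more than sufficient.
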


\begin{proof}
We use the same notations as in Theorem \ref{th:sstab2}, and we consider the candidate Lyapunov function (\ref{21bis}).
\startchris Using (\ref{eq:s2lmi1}) and (\ref{eq:s2lmi2}), Equation (\ref{eq:lyapt:2}) \stopchris still holds along the solutions of (\ref{eq:slin})-(\ref{eq:sbound}). Moreover, for all $k$ in $K$,
\begin{eqnarray*}
V(w(t_{k+1},.))
          & \leq & e^{2|\mu_{i_{k+1}}|} \int_0^1 y_{k+1}(t_{k+1},x)^\top Q_{i_{k+1}} y_{k+1}(t_{k+1},x) dx \\
          & \leq & \gamma e^{2|\mu_{i_{k+1}}|} \int_0^1 y_{k+1}(t_{k+1},x)^\top Q_{i_{k}} y_{k+1}(t_{k+1},x) dx \\
          & \le & \gamma e^{2|\mu_{i_{k+1}}|+2|\mu_{i_{k}}|} \int_0^1 y_k(t_{k+1},x)^\top \mathcal{Q}_{i_k}(x) y_k(t_{k+1},x) dx \\
          & \le &\gamma e^{2\bar\Delta_\mu} \lim_{t\rightarrow t_{k+1}^-} V(w(t,.)).
\end{eqnarray*}
The end of the proof follows the same lines as that of Theorem~\ref{th:sstab2}.
\end{proof}

\startchris
Let us note that 
Theorem \ref{th:sstab3} can be deduced from Theorem \ref{th:sstab4} by selecting $\gamma=1$ and $\mu_i=0$ for all $i\in I$. 
\stopchris

\ifthenelse{\boolean{arxivbool}}{
\section{Computational aspects}
\label{sec:comp}

The sufficient stability conditions of the results presented in this paper may be solved using classical numerical tools. More precisely, let us remark that the matrix inequalities (\ref{eq:s1lmi1}) and (\ref{eq:s1lmi2}) in the statement of Theorem \ref{th:sstab1} are linear in $Q_i$ but nonlinear in $\mu$, some numerical methods may be used:
\begin{itemize}
\item by particularizing to the case $\mu=0$;
\item or when the source terms in (\ref{eq:slin})-(\ref{eq:sbound}) are diagonal;
\item or when the matrices $\Lambda_i$ are either all positive definite or all negative definite.
\end{itemize}

Let us consider these three cases in the next three sections.

\subsection{Particularizing to the case $\mu=0$}

By letting $\mu=0$ in the conditions (\ref{eq:s1lmi1}) and (\ref{eq:s1lmi2}), the matrix inequalities in Theorem \ref{th:sstab1} do not depend on the $x$-variable. Therefore the following matrix inequalities 
\begin{equation}
\label{eq:s1lmi1:lin}
 F_i^\top Q_i +Q_i F_i \le -2\nu I_n\ , \ Q_i \le I_n.
\end{equation}
\begin{equation}
\label{eq:s1lmi2:lin}
 \left(\begin{matrix}
 I_{m}& 0_{m,n-m} \\
      G_{i+-}& G_{i++}
    \end{matrix}
  \right)^\top
  Q _i \Lambda_i\left(\begin{matrix}
 I_{m}& 0_{m,n-m} \\
      G_{i+-}& G_{i++}
    \end{matrix}
  \right)  
\leq   \left(\begin{matrix}
      G_{i--}& G_{i-+} \\
      0_{n-m,m}& I_{n-m} \\
    \end{matrix}
  \right)^\top
   Q_i \Lambda_i \left(\begin{matrix}
      G_{i--}& G_{i-+} \\
      0_{n-m,m}& I_{n-m} \\
    \end{matrix}
  \right)
\end{equation}
imply (\ref{eq:s1lmi1}) and (\ref{eq:s1lmi2}).

The previous conditions (\ref{eq:s1lmi1:lin}) and (\ref{eq:s1lmi2:lin}) are linear in the unknown variables $\nu$ and $Q_i$ and can therefore be solved using semi-definite programming (see e.g., \cite{Lofberg_CACSDConf_04} with \cite{Sturm_OptMethSoft_99}).
It is thus obtained the following result as a corollary of Theorem \ref{th:sstab1}:
\begin{corollary}\label{corollary:1}
Let us assume that, for all $i\in I$, $m_i=m$, and there exist $\nu>0$, and diagonal positive definite matrices $Q_i$ in $\RR^{n\times n}$ satisfying the matrix inequalities (\ref{eq:s1lmi3}), (\ref{eq:s1lmi1:lin}) and (\ref{eq:s1lmi2:lin}).
Then, the switched linear hyperbolic system (\ref{eq:slin})-(\ref{eq:sbound}) is GUES
with respect to the set of switching signals $\mathcal{S}(\RR_+,I)$.
\end{corollary}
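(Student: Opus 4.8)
The plan is to verify that the hypotheses (\ref{eq:s1lmi1:lin}) and (\ref{eq:s1lmi2:lin}), together with the coupling equalities (\ref{eq:s1lmi3}), imply all the assumptions of Theorem~\ref{th:sstab1} for the particular choice $\mu=0$, and then to invoke that theorem directly. First I would set $\mu=0$. With this choice, $\mathcal{Q}_i(x)=\diag[Q_i^-,Q_i^+]=Q_i$ is independent of $x$, so the $x$-dependence in (\ref{eq:s1lmi1}) and (\ref{eq:s1lmi2}) disappears and both reduce to conditions on the constant matrices $Q_i$. In particular, $\mathcal{Q}_i(0)=\mathcal{Q}_i(1)=Q_i$, so the boundary inequality (\ref{eq:s1lmi2}) at $\mu=0$ becomes literally (\ref{eq:s1lmi2:lin}); there is nothing to prove beyond this identification.

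Next I would check that (\ref{eq:s1lmi1:lin}) implies (\ref{eq:s1lmi1}) evaluated at $\mu=0$, which reads $F_i^\top Q_i+Q_i F_i\le -2\nu Q_i$. From the first inequality in (\ref{eq:s1lmi1:lin}) we have $F_i^\top Q_i+Q_i F_i\le -2\nu I_n$, so it suffices to establish $-2\nu I_n\le -2\nu Q_i$. This follows from the second constraint $Q_i\le I_n$ in (\ref{eq:s1lmi1:lin}): indeed $I_n-Q_i\ge 0$ and $\nu>0$ give $2\nu(I_n-Q_i)\ge 0$, that is $-2\nu I_n\le -2\nu Q_i$. Chaining the two inequalities yields (\ref{eq:s1lmi1}). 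The equalities (\ref{eq:s1lmi3}) are common to both statements and are assumed directly.

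Having thus verified every hypothesis of Theorem~\ref{th:sstab1} with $\mu=0$ and the given $\nu$ and $Q_i$, I would conclude GUES with respect to $\mathcal{S}(\RR_+,I)$ by direct application of that theorem. There is no genuine obstacle here, since the statement is a specialization; the only point requiring care is the monotonicity step $Q_i\le I_n\Rightarrow -2\nu I_n\le -2\nu Q_i$, which relies crucially on $\nu>0$. The role of the auxiliary constraint $Q_i\le I_n$ is precisely to convert the affine right-hand side $-2\nu I_n$ into the required $-2\nu Q_i$, which is what allows the conditions of Theorem~\ref{th:sstab1} (nonlinear in $\mu$) to be replaced by the linear matrix inequalities (\ref{eq:s1lmi1:lin})--(\ref{eq:s1lmi2:lin}) amenable to semidefinite programming.
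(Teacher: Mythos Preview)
Your proposal is correct and follows precisely the approach of the paper: set $\mu=0$ so that $\mathcal{Q}_i(x)=Q_i$ is constant, identify (\ref{eq:s1lmi2:lin}) with (\ref{eq:s1lmi2}), and use $Q_i\le I_n$ together with $\nu>0$ to pass from $F_i^\top Q_i+Q_iF_i\le -2\nu I_n$ to $F_i^\top Q_i+Q_iF_i\le -2\nu Q_i$, then apply Theorem~\ref{th:sstab1}. The paper states this reduction in one sentence without spelling out the monotonicity step, which you have made explicit.
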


Moreover the proof of Theorem \ref{th:sstab1} implies that  the function given by,  for all $w$ in $L^2((0,1);\RR^n)$,
$$
V(w)=\int_0^1 w(x)^\top M w(x)\ dx,
$$
where $M = (S_i^-)^\top Q_i^- S_i^- + (S_i^+)^\top Q_i^+ S_i^+$
is a Lyapunov function of (\ref{eq:slin})-(\ref{eq:sbound}), and, since the estimation
$$
 \|w(t,.)\|_{L^2((0,1);\RR^n)} \le C e^{-\nu t} \|w^0\|_{L^2((0,1);\RR^n)}
$$
holds along the solutions $w$ of (\ref{eq:slin})-(\ref{eq:sbound}), for a suitable value $C>0$ (which does not depend on the solution), it implies that the value of $\nu$ is an estimation of the speed of the exponential stability.
Semi-definite programming (as in \cite{Sturm_OptMethSoft_99}) allows us to optimize this estimation and to compute the largest positive value $\nu$ such that the linear matrix inequalities (\ref{eq:s1lmi1:lin}), (\ref{eq:s1lmi2:lin}), and (\ref{eq:s1lmi3}) 
have a solution in the variables $Q_i$ and $\nu$.

Analogous corollaries may be written by letting $\mu_i=0$, for all $i\in I$, in Theorems \ref{th:sstab2} and \ref{th:sstab4},
and by considering directly Theorem \ref{th:sstab3} (for which $\mu=0$). 

To conclude this section, let us emphasize that this approach is allowed since $\mu=0$ is possible in our main results. This is not possible using the approach of \cite{DiagneBastinCoron:automatica:11}, where $\mu$ should be a strictly positive value.

\subsection{With a diagonal source term}
\label{sub:diagonal}
If, for each $i$ in $I$, the source term $F_i$ in (\ref{eq:slin})-(\ref{eq:sbound}) is diagonal, then (\ref{eq:s1lmi1}) is equivalent to
\begin{equation}
\label{eq:s1lmi1:no:source}
-\mu \Lambda^+_i +F_i \le -\nu I_n.
\end{equation}
Therefore the following result is a corollary of Theorem \ref{th:sstab1}:
\begin{corollary}
\label{cor:diagonal}
Let us assume that for all $i\in I$, $m_i=m$, the matrices $F_i$ are diagonal and that there exist $\nu>0$, $\mu\in \RR$, and diagonal positive definite matrices $Q_i$ in $\RR^{n\times n}$ satisfying the matrix inequalities, (\ref{eq:s1lmi2}), (\ref{eq:s1lmi3}) and (\ref{eq:s1lmi1:no:source}).
Then, the switched linear hyperbolic system (\ref{eq:slin})-(\ref{eq:sbound}) is GUES
with respect to the set of switching signals $\mathcal{S}(\RR_+,I)$.
\end{corollary}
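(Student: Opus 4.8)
The plan is to reduce this corollary to Theorem~\ref{th:sstab1} by showing that the three stated hypotheses, (\ref{eq:s1lmi2}), (\ref{eq:s1lmi3}) and (\ref{eq:s1lmi1:no:source}), imply the three hypotheses of that theorem, (\ref{eq:s1lmi1}), (\ref{eq:s1lmi2}) and (\ref{eq:s1lmi3}). Since (\ref{eq:s1lmi2}) and (\ref{eq:s1lmi3}) appear verbatim in both statements, the only work is to establish that, under the diagonality assumption on $F_i$, the scalar inequality (\ref{eq:s1lmi1:no:source}) implies---in fact is equivalent to---the matrix inequality (\ref{eq:s1lmi1}) for every $x\in[0,1]$; the conclusion then follows by invoking Theorem~\ref{th:sstab1}.

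To carry out this reduction, I would first exploit that $F_i$, $\Lambda_i^+$ and $\mathcal{Q}_i(x)=\diag[e^{2\mu x}Q_i^-,e^{-2\mu x}Q_i^+]$ are all diagonal, hence symmetric and pairwise commuting. Consequently $F_i^\top=F_i$ and $F_i^\top\mathcal{Q}_i(x)+\mathcal{Q}_i(x)F_i=2\mathcal{Q}_i(x)F_i$, while $\mathcal{Q}_i(x)\Lambda_i^+=\Lambda_i^+\mathcal{Q}_i(x)$, so the left-hand side of (\ref{eq:s1lmi1}) collapses to $2\mathcal{Q}_i(x)(-\mu\Lambda_i^++F_i)$ and its right-hand side to $-2\nu\mathcal{Q}_i(x)=2\mathcal{Q}_i(x)(-\nu I_n)$. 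Dividing by the positive factor $2$, (\ref{eq:s1lmi1}) is therefore equivalent to
$$
\mathcal{Q}_i(x)\left(-\mu\Lambda_i^++F_i+\nu I_n\right)\le 0.
$$

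The key observation is then that this is a product of two commuting diagonal matrices, hence itself diagonal, whose $k$-th entry equals the (strictly positive) $k$-th diagonal entry of $\mathcal{Q}_i(x)$ times the $k$-th diagonal entry of $-\mu\Lambda_i^++F_i+\nu I_n$. Negative semidefiniteness of such a diagonal matrix is equivalent to nonpositivity of each of its entries, and since the factors coming from $\mathcal{Q}_i(x)$ are positive, this holds if and only if $-\mu\Lambda_i^++F_i+\nu I_n\le 0$, i.e.\ if and only if (\ref{eq:s1lmi1:no:source}) holds. I would emphasize that the scalars $e^{\pm2\mu x}$ only rescale each diagonal entry by a positive factor and thus do not affect its sign, so the equivalence is uniform in $x\in[0,1]$ and (\ref{eq:s1lmi1:no:source}) delivers (\ref{eq:s1lmi1}) for all $x$. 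With (\ref{eq:s1lmi1}), (\ref{eq:s1lmi2}) and (\ref{eq:s1lmi3}) all in force, Theorem~\ref{th:sstab1} yields that (\ref{eq:slin})-(\ref{eq:sbound}) is GUES with respect to $\mathcal{S}(\RR_+,I)$.

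I do not anticipate a genuine obstacle: the entire content is the standard fact that an LMI whose matrices are simultaneously diagonal decouples into scalar sign conditions. The only point requiring a little care is confirming that the $x$-dependence truly drops out, which it does precisely because $\mathcal{Q}_i(x)$ remains positive definite for every $x\in[0,1]$; this is what makes the reduction to the $x$-free condition (\ref{eq:s1lmi1:no:source}) legitimate.
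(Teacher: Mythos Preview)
Your proposal is correct and follows exactly the paper's approach: the paper simply asserts that when each $F_i$ is diagonal, (\ref{eq:s1lmi1}) is equivalent to (\ref{eq:s1lmi1:no:source}), and then invokes Theorem~\ref{th:sstab1}. Your write-up supplies the routine verification of this equivalence (via the diagonal/commuting structure and positivity of the entries of $\mathcal{Q}_i(x)$) that the paper leaves implicit.
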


The sufficient conditions (\ref{eq:s1lmi2}), (\ref{eq:s1lmi3}) and (\ref{eq:s1lmi1:no:source})
of Corollary \ref{cor:diagonal} are nonlinear in the unknown variables $\nu$, $\mu$ and $Q_i$, due to the term $\mathcal{Q}_i(1)$ in (\ref{eq:s1lmi2}) which depends nonlinearly on 
$Q_i$ and $\mu$. However, $\mu$ being a scalar variable, one may combine a line search algorithm with semi-definite programming to solve (\ref{eq:s1lmi2}), (\ref{eq:s1lmi3}) and (\ref{eq:s1lmi1:no:source}).
Analogously, when the source terms are diagonal in (\ref{eq:slin})-(\ref{eq:sbound}), line search algorithms could be used to numerically check the sufficient conditions of Theorems \ref{th:sstab2} and \ref{th:sstab4}.

\subsection{When $\Lambda_i$ are all positive definite or all negative positive}

Let us assume in this section, that all velocities in (\ref{eq:slin})-(\ref{eq:sbound}) have the same sign, that is that either, for all $i$ in $I$, $\Lambda_i$ is positive definite or, for all $i$ in $I$, $\Lambda_i$ is negative definite. To ease the presentation of this section, it is assumed, that the first case occurs: for all $i$ in $I$, $\Lambda_i^+=\Lambda_i$ and $m_i=0$.
Then the following matrix inequalities
\begin{equation}
\label{eq:s1lmi1:positive}
-2 \mu  Q_i\Lambda_i + F_i^\top  Q_i +Q_i F_i \le -2\nu I_n,\  Q_i\le I_n,
\end{equation}
\begin{equation}
\label{eq:s1lmi2:positive}
 G_{i}^\top
  Q _i \Lambda_i  G_{i}
\leq  
   e ^{-2\mu }Q_i \Lambda_i
\end{equation}
imply (\ref{eq:s1lmi1}) and (\ref{eq:s1lmi2}).
This gives us the following corollary of Theorem \ref{th:sstab1}:
\begin{corollary}\label{cor:last}
Let us assume that, for all $i\in I$, $m_i=0$, and that there exist $\nu>0$, $\mu\in \RR$, and diagonal positive definite matrices $Q_i$ in $\RR^{n\times n}$ satisfying 
the matrix inequalities (\ref{eq:s1lmi3}), (\ref{eq:s1lmi1:positive}) and (\ref{eq:s1lmi2:positive}).
Then, the switched linear hyperbolic system (\ref{eq:slin})-(\ref{eq:sbound}) is GUES
with respect to the set of switching signals $\mathcal{S}(\RR_+,I)$.
\end{corollary}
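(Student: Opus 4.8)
The plan is to deduce the statement directly from Theorem~\ref{th:sstab1} by showing that the hypotheses of Corollary~\ref{cor:last} are a sufficient (indeed, linearized) reformulation of those of that theorem in the special case $m_i=0$. First I would record the simplifications forced by $m_i=0$: since no negative characteristics are present, $\Lambda_i^+=\Lambda_i$, and the blocks $Q_i^-$, $S_i^-$, $G_{i--}$, $G_{i-+}$, $G_{i+-}$ all vanish (they live in spaces of zero dimension), so that $Q_i=Q_i^+$, $S_i=S_i^+$, and the weight collapses to $\mathcal{Q}_i(x)=e^{-2\mu x}Q_i$. In particular the commutation $\mathcal{Q}_i(x)\Lambda_i=\Lambda_i\mathcal{Q}_i(x)$ holds automatically, both matrices being diagonal.

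Next I would verify the first matrix inequality. Substituting $\mathcal{Q}_i(x)=e^{-2\mu x}Q_i$ and $\Lambda_i^+=\Lambda_i$ into (\ref{eq:s1lmi1}) and dividing through by the positive scalar $e^{-2\mu x}$, the condition (\ref{eq:s1lmi1}) becomes, for every $x$, the $x$-independent inequality $-2\mu Q_i\Lambda_i+F_i^\top Q_i+Q_i F_i\le -2\nu Q_i$. The hypothesis (\ref{eq:s1lmi1:positive}) bounds the left-hand side by $-2\nu I_n$, while $Q_i\le I_n$ yields $-2\nu I_n\le -2\nu Q_i$ because $\nu>0$; chaining the two inequalities produces exactly the required bound. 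The role of the normalization $Q_i\le I_n$ is precisely to trade the $Q_i$-weighted right-hand side of (\ref{eq:s1lmi1}) for the constant $-2\nu I_n$, which is what renders (\ref{eq:s1lmi1:positive}) linear in the pair $(\nu,Q_i)$.

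Then I would treat the boundary inequality. With $m=m_i=0$ the two bordered matrices in (\ref{eq:s1lmi2}) degenerate: the first collapses to $G_{i++}=G_i$ and the second to $I_n$. Using $\mathcal{Q}_i(0)=Q_i$ and $\mathcal{Q}_i(1)=e^{-2\mu}Q_i$, condition (\ref{eq:s1lmi2}) becomes $G_i^\top Q_i\Lambda_i G_i\le e^{-2\mu}Q_i\Lambda_i$, which is exactly (\ref{eq:s1lmi2:positive}). Finally, the coupling equalities (\ref{eq:s1lmi3}) are imposed verbatim among the hypotheses of the corollary. Having reproduced (\ref{eq:s1lmi1}), (\ref{eq:s1lmi2}) and (\ref{eq:s1lmi3}), I would conclude by invoking Theorem~\ref{th:sstab1}, which yields GUES with respect to $\mathcal{S}(\RR_+,I)$.

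I do not anticipate a genuine obstacle here: the content is a verification that the displayed sufficient conditions imply those of Theorem~\ref{th:sstab1}. The only points demanding care are bookkeeping ones, namely checking that the degenerate block structure at $m_i=0$ really collapses the bordered matrices to $G_i$ and to $I_n$ (rather than leaving spurious empty blocks), and confirming that the positivity of $\nu$ is what allows $Q_i\le I_n$ to reverse into $-2\nu I_n\le -2\nu Q_i$.
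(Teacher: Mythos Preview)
Your proposal is correct and follows essentially the same route as the paper: the paper states just before the corollary that (\ref{eq:s1lmi1:positive}) and (\ref{eq:s1lmi2:positive}) imply (\ref{eq:s1lmi1}) and (\ref{eq:s1lmi2}) when $m_i=0$, and then invokes Theorem~\ref{th:sstab1}. You have simply written out the verification in full, including the role of the normalization $Q_i\le I_n$ in passing from $-2\nu I_n$ to $-2\nu Q_i$, which the paper leaves implicit.
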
}{\startchris The sufficient stability conditions of the results presented in this paper may be solved using classical numerical tools. E.g. even if the matrix inequalities (\ref{eq:s1lmi1}) and (\ref{eq:s1lmi2}) in the statement of Theorem \ref{th:sstab1} are linear in $Q_i$ but nonlinear in $\mu$, some numerical methods may be used:
\begin{itemize}
\item by particularizing to the case $\mu=0$;
\item or when the source terms in (\ref{eq:slin})-(\ref{eq:sbound}) are diagonal;
\item or when the matrices $\Lambda_i$ are either all positive definite or all negative definite.
\end{itemize}
See \cite{Prieur:Girard:Witrant:arxiv:2013} for more details.\stopchris
}

\ifthenelse{\boolean{arxivbool}}{
The conditions (\ref{eq:s1lmi3}), (\ref{eq:s1lmi1:positive}) and (\ref{eq:s1lmi2:positive})
 of Corollary \ref{cor:last} are again nonlinear in the unknown variables $\nu$, $\mu$ and $Q_i$, due to the product of $\mu$ and $Q_i$ in (\ref{eq:s1lmi1:positive}) and the product of $e^{-2\mu}$ and $Q_i$  (\ref{eq:s1lmi2:positive}). However, since $\mu$ is a scalar variable, one may combine a line search algorithm with semi-definite programming to solve (\ref{eq:s1lmi3}), (\ref{eq:s1lmi1:positive}) and (\ref{eq:s1lmi2:positive}).
 Analogous techniques could be used as well in Theorem \ref{th:sstab2}.
For Theorem \ref{th:sstab4}, similar simplifications can be done if, for all $i\in I$,  $\Lambda_i$ is negative definite or positive definite.}{}

\section{Examples}
\label{sec:ex}

\subsection{Mode independent sign structure of characteristics}
\label{ex:A}

Consider the wave equation:
$
\partial_t^2 u(t,x)-\partial_x^2 u(t,x) = 0, 
$
where $x \in[0,1],\; t\in \RR_+$, and $u:\RR_+\times [0,1] \rightarrow \RR$.
The solutions of the previous equations can be written as $u(t,x)=w_1(t,x)+w_2(t,x)$ with 
$w=\left( \begin{smallmatrix} w_1 \\ w_2 \end{smallmatrix} \right)$ verifying
\begin{equation}
\label{eq:ex1a}
\partial_{t} w(t,x)+L\partial_{x} w(t,x) = 0, \quad  x \in[0,1],\; t \in \RR_+
\end{equation}
where $L=\diag(-1,1)$. 
We consider for this hyperbolic system the following switching boundary conditions:
\begin{equation}
\label{eq:ex1b}
w_1(t,1)=\left\{
\begin{array}{rl}
-1.2 \, w_2(t,1) & \text{if } i(t)=1\\
-0.6 \, w_2(t,1) & \text{if } i(t)=2
\end{array}
\right.,\;
w_2(t,0)=\left\{
\begin{array}{rl}
0.6 \, w_1(t,0) & \text{if } i(t)=1\\
1.2 \, w_1(t,0) & \text{if } i(t)=2
\end{array}
\right..
\end{equation}
This is a switched linear hyperbolic system of the form (\ref{eq:slin})-(\ref{eq:sbound}) with $L_1=L_2=L$,
$A_1=A_2=0_2$, $S_1=S_2=I_2$, 
$G_1=\left(  \begin{smallmatrix} 0 & -1.2 \\ 0.6 & 0 \end{smallmatrix} \right)$ and 
$G_2=\left(  \begin{smallmatrix} 0 & -0.6 \\ 1.2 & 0 \end{smallmatrix} \right)$.
With the notations defined in the previous sections, we also have
$\Lambda_1^+=\Lambda_2^+=I_2$ and $F_1=F_2=0_2$. 
\ifthenelse{\boolean{arxivbool}}{We are in the particular case described in Section~\ref{sub:diagonal}. Though, we were not able to apply Corollary~\ref{cor:diagonal} as we could not find $\nu>0$, $\mu\in \RR$, and diagonal positive definite matrices $Q_1$ 
and $Q_2$ such that the set of matrix inequalities (\ref{eq:s1lmi1:no:source}), (\ref{eq:s1lmi2}), and
(\ref{eq:s1lmi3}) 
hold.
}{In this case (\ref{eq:s1lmi1}) becomes
\begin{equation}
\label{eq:s1lmi1:no:source}
-\mu \Lambda^+_i\le -\nu I_n.
\end{equation}
We were not able to apply Theorem~\ref{th:sstab1} as we could not find $\nu>0$, $\mu\in \RR$, and diagonal positive definite matrices $Q_1$ 
and $Q_2$ such that the set of matrix inequalities (\ref{eq:s1lmi2}), 
(\ref{eq:s1lmi3}) and (\ref{eq:s1lmi1:no:source}) 
hold.
} 
Actually, this could be explained by the fact that it is possible to find a switching signal that destabilizes the system as shown on
the left part of Figure~\ref{fig:1} (where a periodic switching signal is used with a period equals to $1$). 

We can prove the exponential stability for a set of switching signals with an assumption on the average dwell time using Theorem~\ref{th:sstab2}.
Let us remark that since $F_1=F_2=0$, (\ref{eq:s2lmi1}) is equivalent to $\mu_i\ge \nu$ for $i\in \{1,2\}$. One can verify that Equations 
(\ref{eq:s2lmi2}), (\ref{eq:s2lmi3}) and  (\ref{eq:s2lmi4}) hold as well for the choices
$Q_1= \left(  \begin{smallmatrix} 0.75 & 0 \\ 0 & 2 \end{smallmatrix} \right)$, $Q_2= \left(  \begin{smallmatrix} 1.5 & 0 \\ 0 & 1 \end{smallmatrix} \right)$ and $\gamma=2$. 
Then, Theorem~\ref{th:sstab2} guarantees the stability of the switched linear hyperbolic system for switching signals with average dwell time greater than 
$\frac{\ln(\gamma)}{2\nu}=2.3105$. The right part of Figure~\ref{fig:1} shows the stable behavior of the switched linear hyperbolic system 
for a periodic switching signal with a period equals to $2.4$.
\begin{figure}[ht!]
  \begin{center}
\begin{tabular}{cc}
\includegraphics[height=\heightfigure cm]{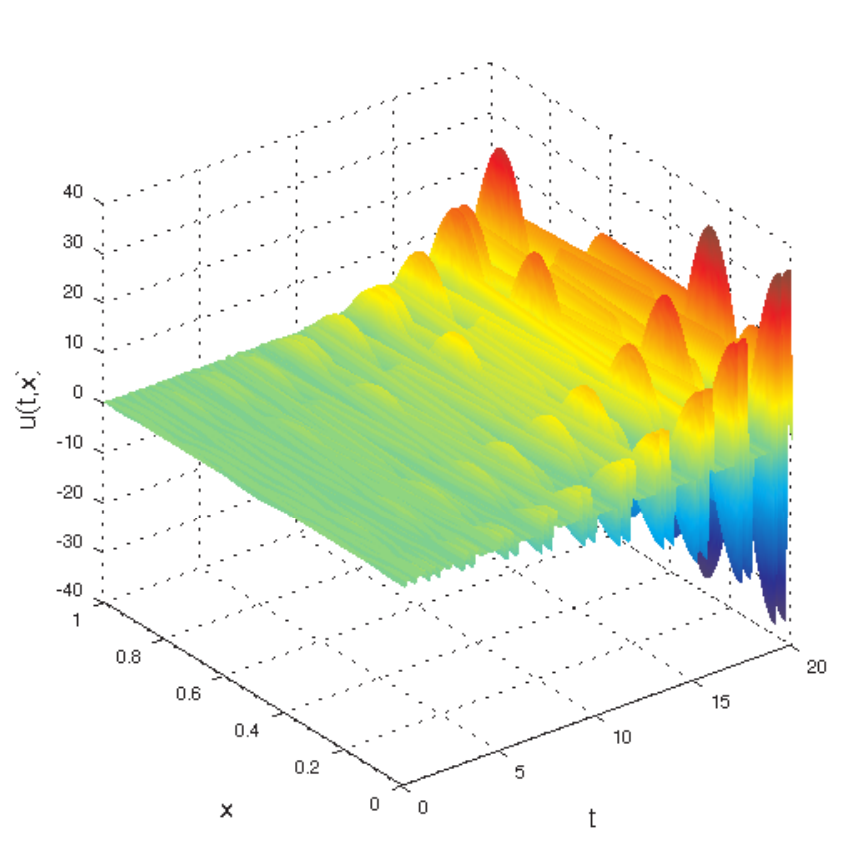}
&
\includegraphics[height=\heightfigure cm]{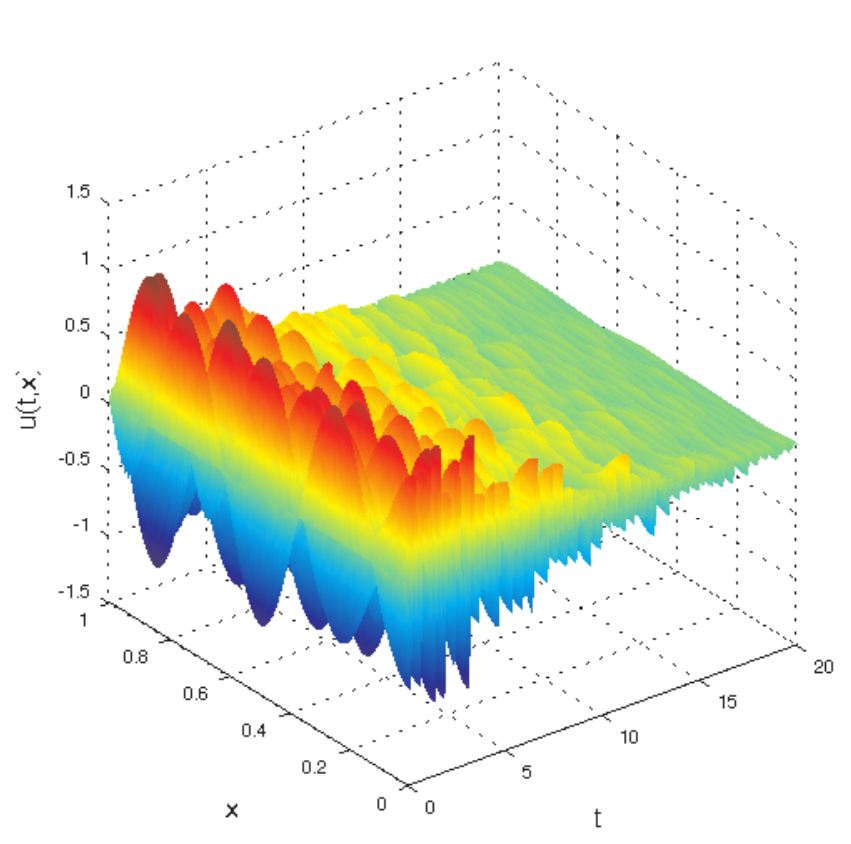}
  \end{tabular}
    \caption{Time evolution of $u=w_1+w_2$, solution of (\ref{eq:ex1a})-(\ref{eq:ex1b}) for periodic switching signals of period $1$ (left) and $2.4$ (right).}
    \label{fig:1}
  \end{center}
\end{figure}
To illustrate \ifthenelse{\boolean{arxivbool}}{Corollary~\ref{cor:diagonal}}{Theorem~\ref{th:sstab1}}, we add a diagonal damping term to (\ref{eq:ex1a}): 
\begin{equation}
\label{eq:ex1c}
\partial_{t} w(t,x)+L\partial_{x} w(t,x) = A w(t,x), \quad  x \in [0,1],\; t\in  \RR_+
\end{equation}
where $A=\diag(-0.3,-0.3)$. The boundary conditions are given by (\ref{eq:ex1b}). Now, $A_1=A_2=F_1=F_2=A$ and the other matrices of the system remain unchanged.
In the present case (\ref{eq:s1lmi1:no:source}) is equivalent to $\nu \le \mu+0.3$. 
One can verify that \ifthenelse{\boolean{arxivbool}}{Corollary~\ref{cor:diagonal}}{Theorem~\ref{th:sstab1}} applies with $\mu=-0.2$, $\nu=0.1$ and $Q_1=Q_2=\left(  \begin{smallmatrix} 1.5 & 0 \\ 0 & 1 \end{smallmatrix} \right)$. Then, Theorem~\ref{th:sstab1} guarantees the stability of the switched linear hyperbolic system for all switching signals.  Figure~\ref{fig:2} shows the stable behavior of the switched linear hyperbolic system 
for a periodic switching signal of period $1$.

\begin{figure}[ht!]
  \begin{center}
\includegraphics[height=\heightfigure cm]{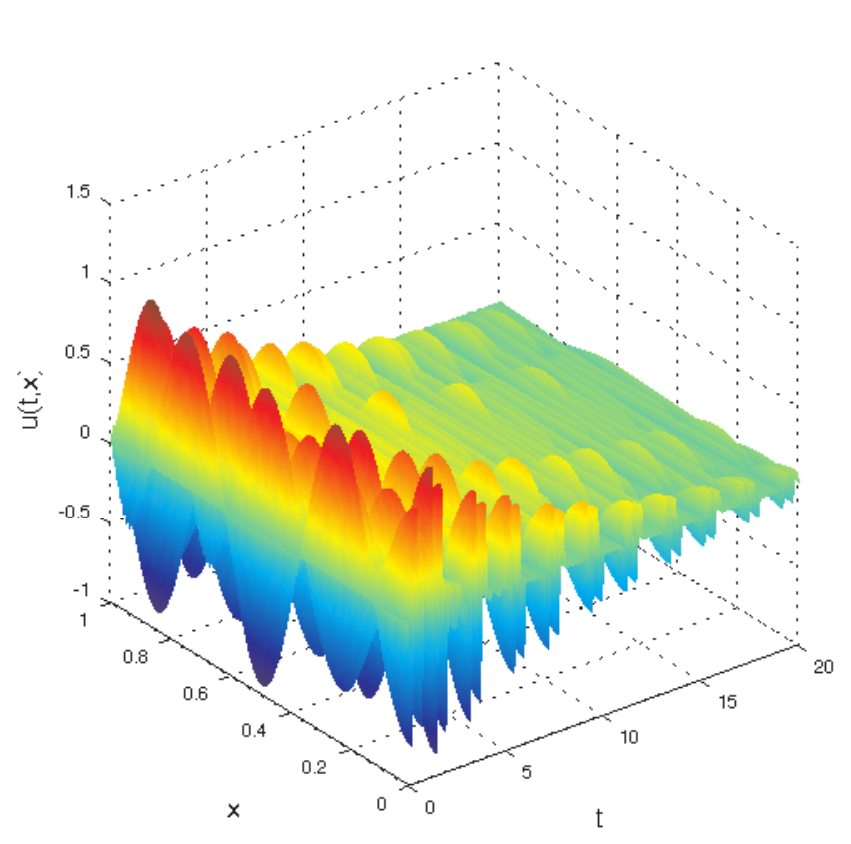}
    \caption{Time evolution of $u=w_1+w_2$, solution of (\ref{eq:ex1b})-(\ref{eq:ex1c}) for a periodic switching signal of period $1$.}
    \label{fig:2}
  \end{center}
\end{figure}

\subsection{Mode dependent sign structure of characteristics}
\label{ex:B}

To illustrate the results of Section~\ref{sec:stabedep}, we consider the following switched linear hyperbolic system 
\begin{equation}
\label{eq:ex2a}
\partial_{t} w(t,x)+L_{i(t)} \partial_{x} w(t,x) = F w(t,x), \quad  x \in[0,1],\; t\in  \RR_+
\end{equation}
where $w:\RR_+\times [0,1] \rightarrow \RR$, $i(t)\in I=\{1,2\}$, $L_1=1$ and $L_2=-1$ and $F\in \RR$. The boundary conditions are given by
\begin{equation}
\label{eq:ex2b}
\begin{array}{ll}
w(t,0)=G w(t,1) & \text{if } i(t)=1\\
w(t,1)=G w(t,0) & \text{if } i(t)=2
\end{array}
\end{equation}
and $G>0$.
This is a switched linear hyperbolic system of the form (\ref{eq:slin})-(\ref{eq:sbound}) with 
$A_1=A_2=F$, $S_1=S_2=1$, and
$G_1=G_2=G$. With the notation defined in the previous sections, we also have
$\Lambda_1^+=\Lambda_2^+=1$ and $F_1=F_2=F$. 

We assume that $F<-\ln(G)$; if this does not hold, then it can be shown that the linear hyperbolic systems in each mode are both not asymptotically stable.
If $F<0$ and $G\le 1$, then Theorem~\ref{th:sstab3} applies with $Q_1=Q_2=1$ and $\nu=-F$. Hence, in that case Theorem~\ref{th:sstab3} guarantees the stability of the switched linear hyperbolic system for all switching signals. 
If $G>1$ (resp. $F>0$) then the condition (\ref{eq:s3lmi2}) (resp. (\ref{eq:s3lmi1})) of Theorem~\ref{th:sstab3} does not hold and thus Theorem~\ref{th:sstab3} does not apply.

If $G>1$, let $F<\mu<-\ln(G)$,
then Theorem~\ref{th:sstab4} holds with
$\mu_1=\mu_2=\mu$, $\nu=\mu-F$, $\gamma=1$ and $Q_1=Q_2=1$. Then, Theorem~\ref{th:sstab4} guarantees the stability of the switched linear hyperbolic system for switching signals with average dwell time $\tau_D$ greater than $\frac{\bar \Delta_\mu}{\nu}=\frac{-2\mu}{\mu-F}$ for any $\mu \in (F,-\ln(G))$.
The minimal value of $\frac{-2\mu}{\mu-F}$ in this interval is $\frac{-2\ln(G)}{\ln(G)+F}$; therefore the
stability of the switched linear hyperbolic system is guaranteed for switching signals with $\tau_D$ greater than $\frac{-2\ln(G)}{\ln(G)+F}$. For $G=2$ and $F=-1$, in that case the minimal required $\tau_D$ is $4.5178$.
Figure~\ref{fig:3} shows unstable and stable behaviors for these values of $G$ and $F$ and for periods equal to $1.2$ and $4.6$.

If $F>0$, let $G$ and $\mu$ such that $F<\mu<-\ln(G)$,
then Theorem~\ref{th:sstab4} holds with
$\mu_1=\mu_2=\mu$, $\nu=\mu-F$, $\gamma=1$ and $Q_1=Q_2=1$. Then, Theorem~\ref{th:sstab4} guarantees the stability of the switched linear hyperbolic system for switching signals with $\tau_D$ greater than $\frac{\bar \Delta_\mu}{\nu}=\frac{2\mu}{\mu-F}$ for any $\mu \in (F,-\ln(G))$.
The minimal value of $\frac{2\mu}{\mu-F}$ in this interval is $\frac{2\ln(G)}{\ln(G)+F}$; therefore stability of the switched linear hyperbolic system is guaranteed for switching signals with $\tau_D>\frac{2\ln(G)}{\ln(G)+F}$.
For $G=0.5$ and $F=-0.1$, the minimal required $\tau_D$ is $2.3372$.
Figure~\ref{fig:4} shows unstable and stable behaviors for these values of $G$ and $F$ and for periods equal $0.9$ and $2.4$.

\begin{figure}[ht!]
  \begin{center}
\begin{tabular}{cc}
\includegraphics[height=\heightfigure cm]{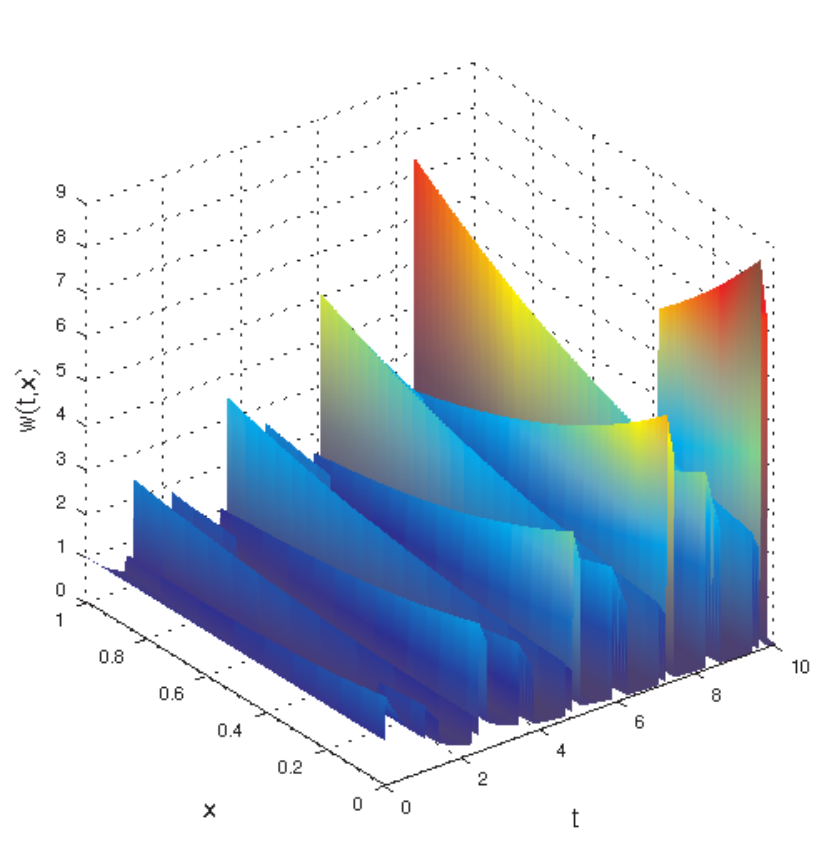}
&
\includegraphics[height=\heightfigure cm]{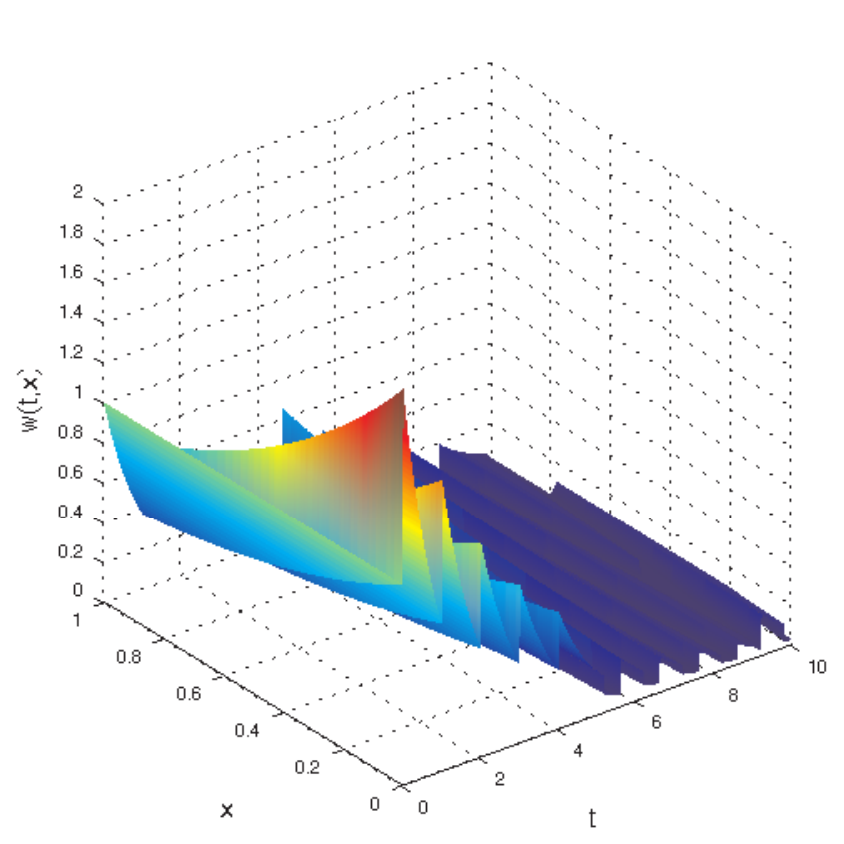}
  \end{tabular}
    \caption{Time evolution of $w$, solution of (\ref{eq:ex2a})-(\ref{eq:ex2b}) with $F=-1$ and $G=2$, for periodic switching signals of period $1.2$ (left) and $4.6$ (right).}
    \label{fig:3}
  \end{center}
\end{figure}
\begin{figure}[ht!]
  \begin{center}
\begin{tabular}{cc}
\includegraphics[height=\heightfigure cm]{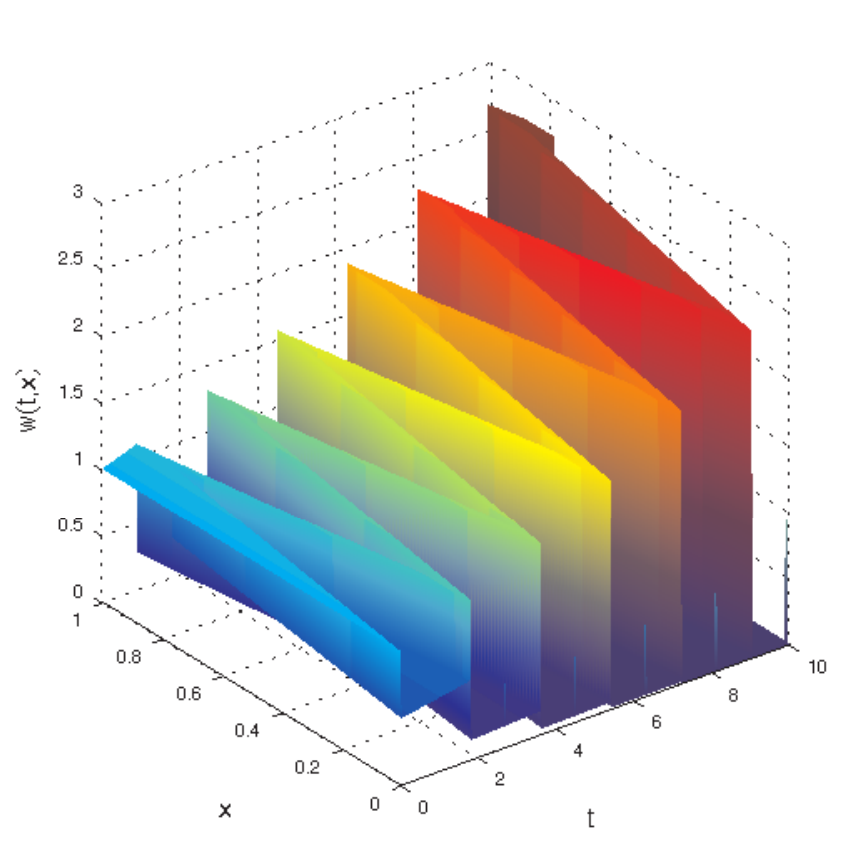}
&
\includegraphics[height=\heightfigure cm]{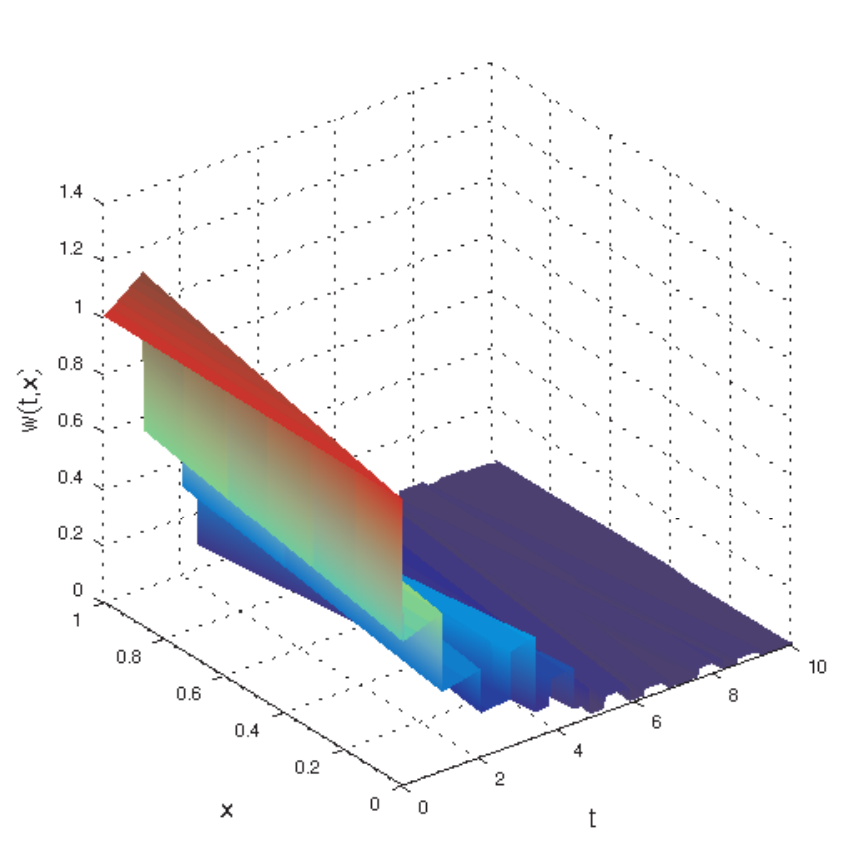}
  \end{tabular}
    \caption{Time evolution of $w$, solution of (\ref{eq:ex2a})-(\ref{eq:ex2b}) with $F=0.1$ and $G=0.5$, for periodic switching signals of period $0.9$ (left) and $2.4$ (right).}
    \label{fig:4}
  \end{center}
\end{figure}

\section{Conclusion}
\label{concl}

In this paper, some sufficient conditions have been derived for the exponential stability of hyperbolic PDE with switching signals defining the 
dynamics and the boundary conditions. 
This stability analysis has been done with Lyapunov functions and exploiting the dwell time assumption, if it holds, of the switching signals. The sufficient stability conditions are written in terms of matrix inequalities which lead to numerically tractable problems. 

This work lets many questions open and may have natural applications on physical applications. In particular, exploiting the sufficient conditions for the derivation of switching stabilizing boundary controls (as for the physical application considered in \cite{SantosPrieur:IEEE:08}) seems to be a natural extension. \startchris The generalization of the results to linear hyperbolic with space-varying entries may also be studied.\stopchris

\bibliographystyle{plain}
\bibliography{cp}

\begin{thebibliography}{10}

\bibitem{Aksikas:automatica:2009}
I.~Aksikas, A.~Fuxman, J.F. Forbes, and J.J. Winkin.
\newblock {LQ} control design of a class of hyperbolic {PDE} systems:
  Application to fixed-bed reactor.
\newblock {\em Automatica}, 45(6):1542--1548, 2009.

\bibitem{AminHanteBayen:IEEETAC:2011}
S.~Amin, F.M. Hante, and A.M. Bayen.
\newblock Exponential stability of switched linear hyperbolic initial-boundary
  value problems.
\newblock {\em IEEE Transactions on Automatic Control}, 57(2):291--301, 2012.

\bibitem{CoronBastinAndrea:SIAM:08}
J.-M. Coron, G.~Bastin, and B.~d'Andr\'ea Novel.
\newblock Dissipative boundary conditions for one-dimensional nonlinear
  hyperbolic systems.
\newblock {\em SIAM Journal on Control and Optimization}, 47(3):1460--1498,
  2008.

\bibitem{CoronAndreaBastin07}
J.-M. Coron, B.~d'Andr{\'e}a Novel, and G.~Bastin.
\newblock A strict {L}yapunov function for boundary control of hyperbolic
  systems of conservation laws.
\newblock {\em IEEE Transactions on Automatic Control}, 52(1):2--11, 2007.

\bibitem{DiagneBastinCoron:automatica:11}
A.~Diagne, G.~Bastin, and J.-M. Coron.
\newblock Lyapunov exponential stability of linear hyperbolic systems of
  balance laws.
\newblock {\em Automatica}, 48(1):109--114, 2012.

\bibitem{SantosPrieur:IEEE:08}
V.~{Dos Santos} and C.~Prieur.
\newblock Boundary control of open channels with numerical and experimental
  validations.
\newblock {\em IEEE Transactions on Control Systems Technology},
  16(6):1252--1264, 2008.

\bibitem{ElFarraChristofides:chemical:2004}
N.H. El-Farra and P.D. Christofides.
\newblock Coordinating feedback and switching for control of spatially
  distributed processes.
\newblock {\em Computers \& chemical engineering}, 28(1-2):111--128, 2004.

\bibitem{GugatTucsnak:string:2011}
M.~Gugat and M.~Tucsnak.
\newblock An example for the switching delay feedback stabilization of an
  infinite dimensional system: The boundary stabilization of a string.
\newblock {\em Systems \& Control Letters}, 60(4):226--233, 2011.

\bibitem{HanteSigalottiTucsnak:arxiv:2011}
F.~Hante, M.~Sigalotti, and M.~Tucsnak.
\newblock On conditions for asymptotic stability of dissipative
  infinite-dimensional systems with intermittent damping.
\newblock {\em Journal of Differential Equations}, to appear.

\bibitem{HanteLeugeringSeidman:applied:2009}
F.M. Hante, G.~Leugering, and T.I. Seidman.
\newblock Modeling and analysis of modal switching in networked transport
  systems.
\newblock {\em Applied Mathematics \& Optimization}, 59(2):275--292, 2009.

\bibitem{HanteSigalotti:SIAM:11}
F.M. Hante and M.~Sigalotti.
\newblock Converse {L}yapunov theorems for switched systems in {B}anach and
  {H}ilbert spaces.
\newblock {\em SIAM Journal on Control and Optimization}, 49(2):752--770, 2011.

\bibitem{HespanhaMorse:CDC:99}
J.P. Hespanha and A.S. Morse.
\newblock Stability of switched systems with average dwell-time.
\newblock In {\em 38th IEEE Conference on Decision and Control (CDC)},
  volume~3, pages 2655--2660, Phoenix, AZ, USA, 1999.

\bibitem{KrsticSmyshlyaev:scl:08}
M.~Krstic and A.~Smyshlyaev.
\newblock Backstepping boundary control for first-order hyperbolic {PDE}s and
  application to systems with actuator and sensor delays.
\newblock {\em Systems \& Control Letters}, 57:750--758, 2008.

\bibitem{libook:94}
T.-T. Li.
\newblock {\em Global classical solutions for quasilinear hyperbolic systems},
  volume~32 of {\em RAM: Research in Applied Mathematics}.
\newblock Masson, Paris, 1994.

\bibitem{liberzon:book:2003}
D.~Liberzon.
\newblock {\em Switching in systems and control}.
\newblock Springer, 2003.

\bibitem{Lofberg_CACSDConf_04}
J.~L\"ofberg.
\newblock Yalmip : A toolbox for modeling and optimization in {MATLAB}.
\newblock In {\em Proc. of the CACSD Conference}, Taipei, Taiwan, 2004.
\newblock http://users.isy.liu.se/johanl/yalmip/.

\bibitem{luo-guo-morgul-1999}
Z.-H. Luo, B.-Z. Guo, and O.~Morgul.
\newblock {\em Stability and stabilization of infinite dimensional systems and
  applications}.
\newblock Communications and Control Engineering. Springer-Verlag, New York,
  1999.

\bibitem{PrieurHalleux04}
C.~Prieur and J.~de~Halleux.
\newblock Stabilization of a 1-{D} tank containing a fluid modeled by the
  shallow water equations.
\newblock {\em Systems \& Control Letters}, 52(3-4):167--178, 2004.

\bibitem{PrieurMazenc:hyper:11}
C.~Prieur and F.~Mazenc.
\newblock {ISS}-{L}yapunov functions for time-varying hyperbolic systems of
  balance laws.
\newblock {\em Mathematics of Control, Signals, and Systems}, 24(1):111--134,
  2012.

\bibitem{Sturm_OptMethSoft_99}
J.F. Sturm.
\newblock Using {SeDuMi} 1.02, a {MATLAB} toolbox for optimization over
  symmetric cones.
\newblock {\em Optimization Methods and Software}, 11-12:625--653, 1999.
\newblock http://sedumi.ie.lehigh.edu/.

\bibitem{Zuazua:Switching:2011}
E.~Zuazua.
\newblock Switching control.
\newblock {\em European Mathematical Society}, 13:85--117, 2011.

\end{thebibliography}
\end{document}